\DeclareRobustCommand{\SkipTocEntry}[5]{}
\definecolor{blue}{rgb}{.255,.41,.884} 
\definecolor{red}{rgb}{1, 0, 0} 
\definecolor{green}{rgb}{.196,.804,.196} 
\definecolor{yellow}{rgb}{1,.648,0} 
\definecolor{pink}{rgb}{1,0.5,0.5}
\newtheorem{theorem}{Theorem}[section]
\newtheorem{lemma}[theorem]{Lemma}
\newtheorem{proposition}[theorem]{Proposition}
\newtheorem{corollary}[theorem]{Corollary}
\theoremstyle{definition}
\newtheorem{definition}[theorem]{Definition}
\theoremstyle{remark}
\newtheorem{remark}[theorem]{Remark}
\newcommand{\II}{{\rm  I\hspace{-.2mm}I}}
\newcommand{\IIo}{\hspace{0.4mm}\mathring{\rm{ I\hspace{-.2mm} I}}{\hspace{.0mm}}}
\newcommand{\III}{{{{\bf\rm I\hspace{-.2mm} I \hspace{-.2mm} I}}{\hspace{.2mm}}}{}}
\newcommand{\IIIo}{{\mathring{{\bf\rm I\hspace{-.2mm} I \hspace{-.2mm} I}}{\hspace{.2mm}}}{}}
\newcommand{\IVo}{{\mathring{{\bf\rm I\hspace{-.2mm} V}}{\hspace{.2mm}}}{}}
\newcommand{\V}{{{\underline{\overline{\bf\rm V}}}}{}}
\newcommand{\otop}{\mathring{\top}}
\def\sideremark#1{\ifvmode\leavevmode\fi\vadjust{\vbox to0pt{\vss
 \hbox to 0pt{\hskip\hsize\hskip1em
 \vbox{\hsize2cm\tiny\raggedright\pretolerance10000
  \noindent #1\hfill}\hss}\vbox to8pt{\vfil}\vss}}}
\numberwithin{equation}{section}
\newcommand{\cc}{\boldsymbol{c}}
\renewcommand{\=}{\stackrel \Sigma =}
\renewcommand\geq{\geqslant}
\renewcommand\leq{\leqslant}
\newcommand\reallywidehat[1]{%
\savestack{\tmpbox}{\stretchto{%
  \scaleto{%
    \scalerel*[\widthof{\ensuremath{#1}}]{\kern-.6pt\bigwedge\kern-.6pt}%
    {\rule[-\textheight/2]{1ex}{\textheight}}
  }{\textheight}%
}{0.5ex}}%
\stackon[1pt]{#1}{\tmpbox}%
}
\newcommand{\bdot }{\mathop{\lower0.33ex\hbox{\LARGE$\cdot$}}}
\definecolor{ao}{rgb}{0.0,0.0,1.0}
\definecolor{forest}{rgb}{0.0,0.3,0.0}
\definecolor{red}{rgb}{0.8, 0.0, 0.0}
\newcommand{\FFo}[1]{\mathring{\underline{\overline{\rm{#1}}}}}
\newcommand{\FF}[1]{\underline{\overline{\rm{#1}}}}
\newcommand{\ce}{\mathcal{E}}
\begin{document}

\subjclass[2010]{
53C18, 53A55, 53C21, 58J32.
}

\renewcommand{\today}{}
\title{
{Higher Fundamental Forms and Warped Product Hypersurfaces
}}
%
%
%

\author{ Samuel Blitz${}^\flat$ \& Josef \v{S}ilhan${}^\sharp$}

\address{${}^\flat$
 Department of Mathematics and Statistics \\
 Masaryk University\\
 Building 08, Kotl\'a\v{r}sk\'a 2 \\
 Brno, CZ 61137} 
   \email{blitz@math.muni.cz}

\address{${}^\sharp$
 Department of Mathematics and Statistics \\
 Masaryk University\\
 Building 08, Kotl\'a\v{r}sk\'a 2 \\
 Brno, CZ 61137} 
   \email{silhan@math.muni.cz}
 
\vspace{10pt}

\begin{abstract}
Warped products are one of the simplest families of Riemannian manifolds that can have non-trivial geometries. In this article, we characterize the geometry of hypersurface embeddings arising from warped product manifolds using the language of higher (Riemannian) fundamental forms. In a similar vein, we also study the geometry of conformal manifolds with embedded hypersurfaces that admits a trivialization of the conformal metric to a product metric, with base manifold given by the embedded hypersurface. We show that the higher conformal fundamental forms play a critical role in their characterization.

\vspace{0.5cm}

\noindent
{\sf \tiny Keywords: 
Riemannian geometry, conformal geometry, submanifold embeddings, holography}

\vspace{2cm}

\end{abstract}


\maketitle

\pagestyle{myheadings} \markboth{S. Blitz \& J. \v{S}ilhan}{On Fundamental Forms and Riemannian and Conformal Warped Products}


\tableofcontents

\section*{Acknowledgements}
SB is supported by the Operational Programme Research Development and Education Project No. CZ.02.01.01/00/22-010/0007541. J\v{S} is supported by the Czech Science Foundation (GACR) grant GA22-00091S.

%

\section{Introduction}
The notion of a \textit{warped product} manifold was introduced by Bishop and O'Neill in 1969~\cite{bishoponeill}. Given two (pseudo-)Riemannian manifolds $(F,g_B)$ and $(B,g_B)$, a warped product manifold $F \times_f B$ is a manifold $F \times B$ equipped with a (pseudo-)Riemannian metric
\begin{equation} \label{warped}
g = f^2 g_F + g_B\,,
\end{equation}
where $f  \in C^\infty_+(B)$. Here, $B$ is called the \textit{base} manifold and $F$ is called the \textit{fiber} manifold. When the warping function $f$ is constant, the manifold is called a \textit{product manifold}. In general, warped product manifolds are emblematic of some of the simplest non-trivial geometries, and as such find broad application in general relativity. For example, both the Schwarzschild metric and the FLRW metric are warped product manifolds and are physically relevant solutions to the Einstein field equations. Indeed, partial differential equations are separable on these manifolds, and hence provide a fertile ground for solving partial differential equations in non-trivial settings.

The distinction between warped product and product manifolds is lost in the conformal geometric setting. A conformal manifold $(M,\cc)$ is a smooth manifold equipped with an equivalence class of metrics $\cc = [g]$, where equivalence between two metrics $g \sim g'$ holds when there exists some $\Omega \in C^\infty_+ (M)$ such that $g' = \Omega^2 g$. Thus, it is easy to see that if a warped product metric is an element of the conformal class, then so is a product manifold, as $f^{-2} \in C^\infty_+ (M)$.

Now, as (warped) product manifolds distinguish a family of submanifolds, it is clear that these distinguished submanifold embeddings should be have distinct geometric features, both intrinsically and extrinsically. To capture this behavior, we specifically study geometric tensor invariants of the embeddings.

The first example of such a tensor invariant was the second fundamental form, introduced by Gauss in the 19th century. In 1867, Bonnet proved the fundamental theorem of surfaces, which, in modern language, asserts that up to rigid Euclidean motions, a surface embedded in $\mathbb{R}^3$ is uniquely determined by its first and second (Riemannian) fundamental forms. This result easily generalizes to hypersurfaces embedded in $\mathbb{R}^d$. The naming of these tensors begs for explanation: is there a third fundamental form? A fourth? Indeed there are---but in the context of Euclidean embeddings, they either vanish or are easily expressible in terms of the first and second fundamental forms~\cite[Volume 3]{Spivak} (depending on the definition provided). However, recent work~\cite{BGW1} found that higher fundamental forms in a conformal setting, which capture information about the hypersurface embedding, have interesting geometric interpretations. It is the goal of this article to show that the Riemannian and conformal fundamental forms play special roles in characterizing (warped) product manifolds that arise from hypersurface embeddings in the Riemannian and conformal settings, respectively.

In Section~\ref{sec:riem}, we introduce and investigate the Riemannian fundamental forms in Riemannian product and warped product manifolds. In Section~\ref{sec:conf}, we remind the reader of conformal fundamental forms and study their behavior when a conformal hypersurface embedding admits a product manifold in its conformal class.

\subsection{Notations and Conventions}

In this article, we denote by $M$ a smooth oriented manifold of dimension $d \geq 3$ and by $\Sigma$ a $d-1$ dimensional manifold, smoothly embedded via $\iota$ into $M$. We use the overline notation $\bar{\bullet}$ to denote quantities associated with the hypersurface $\Sigma$. After equipping $M$ with a metric $g$, there exists a unique Levi-Civita connection $\nabla$ on the tangent bundle $TM$ (and its dual and tensor products). The Riemann curvature tensor of the connection is denoted by $R$ and is defined according to
$$R(x,y)z = (\nabla_x \nabla_y - \nabla_y \nabla_x - \nabla_{[x,y]}) z\,,$$
where $x,y,z \in \Gamma(TM)$ are sections of the tangent bundle and $[x,y]$ is their Lie bracket.

Throughout this manuscript, we use abstract index notation to denote sections of the tangent bundle, the cotangent bundle, and their tensor products; for example, we may write $t^a{}_{bc} \in \Gamma(TM \otimes T^* M \otimes T^* M)$, and the Einstein summation convention is used to denote contraction. We use $\wedge^k$ to denote the antisymmetric $k$th tensor power of a vector bundle and square brackets $[]$ around indices to denote antisymmetrization, $\odot^k$ to denote symmetric $k$th power of the same and round brackets $()$ around indices to denote symmetrization, and often use the subscript $\circ$ (as in $t_{(ab)_{\circ}} \in \Gamma(\odot^2_{\circ} T^* M)$) to denote the trace-free part of a section or a bundle. Furthermore, we often use subscripts to denote contraction; for example, given a vector $n \in \Gamma(TM)$ and a tensor $t_{abc} \in \Gamma(\otimes^3 T^* M)$, we denote the contraction $n^a t_{abc} =: t_{nbc} \in \Gamma(\otimes^2 T^* M)$.

Using these notations, we have that
$$x^a y^b R_{ab}{}^c{}_d z^d =x^a y^b [\nabla_a, \nabla_b] z^c\,.$$
The trace of the Riemann curvature tensor, the Ricci tensor, is denoted by $Ric_{ab} := R_{ca}{}^c{}_b$ and the scalar curvature is denoted by $Sc := g^{ab} Ric_{ab}$. The trace-free part of the Riemann curvature tensor is the Weyl tensor and is defined according to
$$W_{abcd} := R_{abcd} - g_{ac} P_{bd} + g_{ad} P_{bc} + g_{bc} P_{ad} - g_{bd} P_{ac}\,,$$
where the Schouten tensor $P$ is defined according to
$$P_{ab} := \tfrac{1}{d-2} \left(Ric_{ab} - \tfrac{1}{2(d-1)} Sc \, g_{ab} \right)\,,$$
with $J := g^{ab} P_{ab}$.
For later use, we also provide the formula for two well-known higher-order curvature quantities: the Cotton tensor and the Bach tensor, respectively:
\begin{align*}
C_{abc} :=& \nabla_a P_{bc} - \nabla_b P_{ac} \\
B_{ab} :=& \Delta P_{ab} - \nabla^c \nabla_a P_{bc} + P^{cd} W_{acbd}\,,
\end{align*}
where $\Delta :=\nabla^a \nabla_a $ is the negative Laplace operator.

\section{Riemannian Fundamental Forms and Warped Products} \label{sec:riem}
\subsection{Riemannian Fundamental Forms and Hypersurface Embeddings}
We begin by defining Riemannian fundamental forms:
\begin{definition} \label{ff-def}
Let $\iota : \Sigma \hookrightarrow (M,g)$ be a smooth hypersurface embedding and let $n \in \Gamma(T M)$ be any vector orthogonal $T\Sigma$ such that $g(n,n)|_{\Sigma} = 1$. The \textit{first fundamental form} $\bar{g} := \iota^* g$ is the pullback of $g$ to the hypersurface; this is also called the \textit{induced metric}. The \textit{second fundamental form} is defined according to $\II := \iota^* \nabla g(n,\cdot)$. We define the \textit{third fundamental form} according to $\III := \iota^* g(R(n,\cdot) n, \cdot)$, and finally, for $k \geq 4$, we define the \textit{$k$th fundamental form} using abstract index notation:
$$\FF{k}_{ab} := \iota^* (n^{a_1} \cdots n^{a_{k-3}} n^c n^d \nabla_{a_1} \cdots \nabla_{a_{k-3}} R_{cabd})\,.$$
\end{definition}
As is shown in Lemma~\ref{normal-derivs}, this definition provides a natural extension of the second fundamental form, insofar as they are related to normal derivatives of $\nabla n$. Observe that a Riemannian fundamental form $\FF{k}$ has \textit{transverse order} $k-1$, meaning that it depends on no fewer and no more than $k-1$ normal derivatives of the metric.

Generally speaking, the unique feature of the Riemannian fundamental forms is their leading behavior in terms of normal derivatives of the metric---one may always add powers of the second fundamental form at lower order without ruining this behavior. As such, there are alternative forms of the Riemannian fundamental forms that are useful in other contexts. To show leading-order equivalence of these alternative forms, we first need a special set of coordinates near a hypersurface.

In general, for any hypersurface embedding $\Sigma \hookrightarrow (M,g)$, we may express the metric (in a collar neighborhood) in the \textit{normal form}, given by 
$$g = dt^2 + \bar{g}(t,x^k)_{ij} dx^i dx^j\,,$$
where $\bar{g}(t_0,x^k)$ is the induced metric on the hypersurface defined by $t = t_0$ and coordinates $\{x^i\}$ are defined by parallel transporting coordinates on $\Sigma$ along geodesics generated by $\partial_t$. 
The vector $\partial_t$ is called the \textit{unit conormal vector}; it provides a canonical extension of 
the unit normal vector along $\Sigma$ to $M$.
This normal form is quite useful, as it allows us to easily establish alternative formulas for the fundamental forms. Indeed, we have the following technical lemmas.
\begin{lemma} \label{normal-derivs}
Let $\iota : \Sigma \hookrightarrow (M,g)$ be a smooth embedded hypersurface with unit conormal vector $n$. Then, for every $k \geq 2$, we have that
$$\FF{k} = \iota^* \nabla_n^{k-2} \nabla n + \text{products of lower order fundamental forms.}$$
\end{lemma}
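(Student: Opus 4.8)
The plan is to work entirely in the normal form $g = dt^2 + \bar{g}(t,x^k)_{ij}\,dx^i dx^j$ introduced above, in which the unit conormal is $n = \partial_t$ and the $t$-lines are geodesics. Two features of this frame do almost all the work: the geodesic condition gives $\nabla_n n = 0$ throughout the collar, and $n=\partial_t$, $e_i = \partial_i$ are commuting coordinate fields, so $[n,e_i]=0$. A direct Christoffel computation gives $\nabla_a n_b = -\Gamma^t_{ab}$, whose tangential part is $\tfrac12 \partial_t \bar g_{ij}$; thus $\iota^*\nabla n = \II$ and the case $k=2$ is immediate. I would then induct on $k$, the engine being a single \emph{collar} identity obtained by commuting one normal derivative past one tangential derivative.

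First I would establish that identity. Writing $\nabla_n \nabla_a n^c = n^p \nabla_p \nabla_a n^c$ and commuting with the paper's curvature convention, $n^p \nabla_p \nabla_a n^c = n^p \nabla_a \nabla_p n^c + n^p n^d R_{pa}{}^c{}_d$. The one subtle point is that the first term is \emph{not} $\nabla_a(\nabla_n n^c)=0$: since $n^p$ is contracted with the slot differentiated \emph{after} $\nabla_a$, the Leibniz rule leaves the residue $n^p\nabla_a\nabla_p n^c = \nabla_a(\nabla_n n^c) - (\nabla_a n^p)(\nabla_p n^c) = -(\nabla_a n^p)(\nabla_p n^c)$, using $\nabla_n n = 0$. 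Lowering $c\mapsto b$ and reading off tangential indices (where $\nabla_a n^t = 0$, so $(\nabla_a n^p)(\nabla_p n_b) = \II_a{}^p\II_{pb}$) yields the radial, Riccati-type identity, valid throughout the collar,
$$\nabla_n \nabla_a n_b = n^c n^d R_{cabd} - \II_a{}^p\II_{pb}\,.$$
Pulling back gives the case $k=3$, namely $\iota^*\nabla_n\nabla n = \III - \II_a{}^p\II_{pb}$, exhibiting the advertised lower-order product $\II\cdot\II$ and showing why the equality is \emph{not} exact.

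For the inductive step I would apply $\nabla_n^{\,k-3}$ to this collar identity and then pull back. On the left this produces $\iota^*\nabla_n^{\,k-2}\nabla n$. On the right, $\nabla_n n = 0$ lets every factor of $n$ pass through $\nabla_n$, so $\nabla_n^{\,k-3}(n^c n^d R_{cabd}) = n^c n^d \nabla_n^{\,k-3} R_{cabd}$; moreover the same fact identifies the iterated directional derivative $\nabla_n^{\,k-3}$ with the undifferentiated string $n^{a_1}\cdots n^{a_{k-3}}\nabla_{a_1}\cdots\nabla_{a_{k-3}}$ (all correction terms are multiples of $\nabla_n n$), so the pullback of this piece is exactly $\FF{k}$. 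Because the Christoffel symbols $\Gamma^k_{ti}$ are purely tangential, $\nabla_n$ preserves tangentiality of the free indices and the restriction to $\Sigma$ is consistent, introducing no normal-index leakage. The remaining term $\iota^*\nabla_n^{\,k-3}(\II_a{}^p\II_{pb})$ expands by Leibniz (using $\nabla g = 0$ to commute index raising past $\nabla_n$) into $\sum_m \binom{k-3}{m}(\iota^*\nabla_n^{\,m}\II)(\iota^*\nabla_n^{\,k-3-m}\II)$, and by the strong inductive hypothesis each factor $\iota^*\nabla_n^{\,m}\II = \iota^*\nabla_n^{\,(m+2)-2}\nabla n$ is $\FF{m+2}$ plus products of still-lower forms. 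Hence every summand is a product of fundamental forms each of index $< k$, which is precisely the claimed error.

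The main obstacle is the base identity: one must track the commutator carefully and, above all, not discard the residue $-(\nabla_a n^p)(\nabla_p n_b)$, since that single term is the \emph{only} source of the lower-order products and the reason equality fails. The rest is organizational, namely verifying that sliding $\nabla_n^{\,k-3}$ through the $n$-factors and through the tangential projection generates no new curvature (guaranteed by $\nabla_n n = 0$ and the tangentiality of $\Gamma^k_{ti}$), and then invoking strong induction to recognize each Leibniz factor as a lower fundamental form. No curvature beyond the leading $\iota^*\nabla_n^{\,k-2}\nabla n$ can appear, because the curvature tensor is inserted exactly once, in the base identity.
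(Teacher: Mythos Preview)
Your proof is correct and follows essentially the same approach as the paper's. Both arguments hinge on the same Riccati-type identity $\nabla_n \nabla_a n_b = R_{nabn} - (\nabla_a n^c)(\nabla_c n_b)$ (obtained via the commutator and $\nabla_n n = 0$), then apply $\nabla_n^{\,k-3}$, use $\nabla_n n = 0$ again to identify the curvature piece with $\FF{k}$, and invoke Leibniz plus strong induction on the quadratic remainder; the only difference is that you isolate the collar identity once at the outset while the paper performs the same commutation inside the inductive step.
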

\begin{proof}
We prove this lemma by induction. Clearly, when $k = 2$, the identity is true by explicit computation. Now, suppose that the identity holds for all $2 \leq n \leq k$. Now, we compute:
\begin{align*}
\iota^* \nabla_{n}^{k-1} \nabla_a n_b =& \iota^*  \nabla_n^{k-2} n^c \nabla_a \nabla_c n_b + \iota^* \nabla_n^{k-2} n^c R_{cabd} n^d \\
=& \iota^*\nabla_n^{k-2} \nabla_a \nabla_n n_b - \iota^*\nabla_n^{k-2} [(\nabla_a n^c)(\nabla_c n_b)] + \iota^*\nabla_n^{k-2} R_{nabn} \\
=& \iota^*\nabla_n^{k-2} R_{nabn} - \iota^*\nabla_n^{k-2} [(\nabla_a n^c)(\nabla_c n_b)] \\
=& \iota^* n^c n^d n^{a_1} \cdots n^{a_{k-2}} \nabla_{a_1} \cdots \nabla_{a_{k-2}} R_{cabd} - \iota^*\nabla_n^{k-2} [(\nabla_a n^c)(\nabla_c n_b)] \\
=& \FF{k+1}{}_{ab} -  \iota^*\nabla_n^{k-2} [(\nabla_a n^c)(\nabla_c n_b)]\,.
\end{align*}
Note that the third and fourth identities follow because, from the existence of the normal form, there exists a function $t$ such that $g(n,\cdot) = dt$, and so
$$\nabla_n n_a = n^c \nabla_c \nabla_a t = n^c \nabla_a n_c = \tfrac{1}{2} \nabla_a n^2 = 0\,.$$
Now by the Leibniz property and the induction hypothesis, it follows that $\iota^*\nabla_n^{k-2} [(\nabla_a n^c)(\nabla_c n_b)]$ can be expressed along $\Sigma$ as a sum of products of lower order fundamental forms. The lemma follows.
\end{proof}
\begin{remark}
Note that the definition of the third fundamental form given in~\cite[Volume 3]{Spivak} is $-\iota^* \nabla_n \nabla_a n_b$, and so Lemma~\ref{normal-derivs} shows that Definition~\ref{ff-def} is a reasonable extension to higher fundamental forms.
\end{remark}

Using this result, we have another straightforward lemma.  Here $\mathcal{L}$ denotes the Lie derivative.
\begin{lemma} \label{lie-derivs}
Let $\iota : \Sigma \hookrightarrow (M,g)$ be a smooth embedded hypersurface with unit conormal vector $n$. Then, for every $k \geq 2$, we have that
$$\FF{k} = \tfrac{1}{2} \iota^* \mathcal{L}_n^{k-1} g+ \text{products of lower order fundamental forms.}$$
\end{lemma}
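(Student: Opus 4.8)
The plan is to build directly on Lemma~\ref{normal-derivs}. Since that result already yields $\FF{k} = \iota^* \nabla_n^{k-2}\nabla n + (\text{products of lower order fundamental forms})$, it suffices to show that replacing the iterated covariant normal derivative $\nabla_n^{k-2}$ by the iterated Lie derivative $\mathcal{L}_n^{k-2}$ alters the answer only by products of fundamental forms of lower index. The starting point is the identity $(\mathcal{L}_n g)_{ab} = \nabla_a n_b + \nabla_b n_a$, valid for any torsion-free connection. Because the normal form furnishes a function $t$ with $n_a = \nabla_a t$, the tensor $\nabla_a n_b = \nabla_a \nabla_b t$ is symmetric, so $\tfrac{1}{2}\mathcal{L}_n g = \nabla n$ on the collar. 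Writing $T := \nabla n$, this settles the case $k=2$ and reduces the claim to $\iota^* \nabla_n^{k-2} T = \iota^* \mathcal{L}_n^{k-2} T + (\text{products of lower order fundamental forms})$, since $\tfrac{1}{2}\mathcal{L}_n^{k-1} g = \mathcal{L}_n^{k-2}(\tfrac{1}{2}\mathcal{L}_n g) = \mathcal{L}_n^{k-2} T$.

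For the comparison I would use the elementary formula $(\mathcal{L}_n S)_{ab} = \nabla_n S_{ab} + S_{cb}\nabla_a n^c + S_{ac}\nabla_b n^c$ for a symmetric two-tensor $S$, again valid because $\nabla$ is torsion-free. The two correction terms are contractions of $S$ against $\nabla n$, the second fundamental form $\II$ to leading order, so each application of $\mathcal{L}_n$ agrees with $\nabla_n$ up to a factor of $\FF{2}$. Setting $A_m := \mathcal{L}_n^{m-1} T$ and $B_m := \nabla_n^{m-1} T$, I would prove by induction on $m$ that $A_m = B_m + C_m$, where $C_m$ is a sum of products of tensors of the form $\nabla_n^{j}\nabla n$ with $j \leq m-2$. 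The inductive step follows by applying $\mathcal{L}_n$ to $B_m + C_m$, rewriting $\mathcal{L}_n$ as $\nabla_n$ plus the two $\nabla n$-corrections, and invoking the Leibniz rule; one notes that $\mathcal{L}_n$ acting on the inverse metric used to form the contractions also only produces an extra factor $\mathcal{L}_n g^{ab} = -2\nabla^a n^b$ of the same low order. Taking $m = k-1$ and pulling back, Lemma~\ref{normal-derivs} converts each factor $\nabla_n^{j}\nabla n$ with $j \leq k-3$ into $\FF{j+2}$ plus lower terms, and since $j+2 \leq k-1 < k$, every resulting monomial is a product of fundamental forms of index strictly less than $k$, while the surviving top term $\iota^* B_{k-1} = \iota^* \nabla_n^{k-2}\nabla n$ is exactly $\FF{k}$ up to lower order.

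The main technical obstacle is the bookkeeping, of two kinds. First, one must check that no correction term ever involves a fundamental form of index $k$ or higher; here the crucial point is that the top term of $A_m$ is $B_m$ itself, whereas every correction carries at least one additional, lower-order factor of $\nabla n$, so the index count is strictly controlled throughout the induction. Second, the contractions appearing in $C_m$ run over the full tangent bundle, so one must verify that they descend correctly under $\iota^*$. For this I would record the auxiliary fact that $\nabla n$ annihilates $n$ in both slots, since $\nabla_n n = \tfrac{1}{2}\nabla(n^2) = 0$ and $\nabla n$ is symmetric, and that this property is preserved by both $\nabla_n$ and $\mathcal{L}_n$, the latter because $\mathcal{L}_n n = [n,n] = 0$. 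Hence every tensor entering $C_m$ is tangential along $\Sigma$, and its pullback factors through the pullbacks of the individual $\nabla_n^{j}\nabla n$. Assembling these pieces gives $\tfrac{1}{2}\iota^* \mathcal{L}_n^{k-1} g = \FF{k} + (\text{products of lower order fundamental forms})$, as claimed.
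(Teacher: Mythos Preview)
Your proposal is correct and follows essentially the same route as the paper: both arguments hinge on the identity $(\mathcal{L}_n S)_{ab} = \nabla_n S_{ab} + S_{cb}\nabla_a n^c + S_{ac}\nabla_b n^c$, use the symmetry $\nabla_a n_b = \nabla_b n_a$ coming from $n = dt$ to write $\tfrac{1}{2}\mathcal{L}_n g = \nabla n$, and then invoke Lemma~\ref{normal-derivs} to translate the resulting $\nabla_n^{j}\nabla n$ factors into lower fundamental forms. The only organisational difference is that the paper runs the induction directly on the pulled-back statement, while you carry out the induction off $\Sigma$ on $A_m - B_m$ and pull back at the end; your version has the small advantage of making the tangentiality of the correction terms explicit, which the paper's proof leaves implicit.
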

\begin{proof}
We prove by induction. When $k = 2$, the result follows by explicit computation. Next, suppose that the identity holds for all $2 \leq n \leq k$. Then,
\begin{align*}
\tfrac{1}{2} \iota^* \mathcal{L}_n^{k} g_{ab} =& \tfrac{1}{2} \iota^* \mathcal{L}_n \mathcal{L}_n^{k-1} g_{ab} \\
=& \tfrac{1}{2} \iota^* \nabla_n \mathcal{L}_n^{k-1} g_{ab} + \iota^* (\mathcal{L}^{k-1}_n g)_{c(a} (\nabla_{b)} n^c) \\
=& \tfrac{1}{2} \iota^* \nabla_n \mathcal{L}_n^{k-1} g_{ab} + \FF{k}_{c(a} \II_{b)}^c + \text{products of lower order fundamental forms} \\
=& \iota^* \nabla_n^{k-1} \nabla_a n_b + \text{products of lower order fundamental forms.}
\end{align*}
But from Lemma~\ref{normal-derivs}, the last line can be expressed in terms of $\FF{k+1}$, and so induction follows.
\end{proof}

\begin{remark}\label{weights}
Note that in all of these constructions involving equality modulo lower order terms, we may ascribe weights to better control which fundamental forms appear in each identity. Indeed, assigning a weight of $3-k$ to $\FF{k}$ and a weight of $-2$ for every contraction, it is clear that the weights of each summand in the above identities are always equal.

Additionally, observe that when the lower fundamental forms vanish, the original definition of the fundamental forms is equivalent to the constructions provided in Lemmas~\ref{normal-derivs} and~\ref{lie-derivs}.
\end{remark}

Using Lemma~\ref{lie-derivs}, for any fundamental form, one may expres
 it as a sum of Lie derivatives of the metric and their contractions:
\begin{corollary} \label{cor-lies}
Let $\iota : \Sigma \hookrightarrow (M,g)$ be a smooth embedded hypersurface with unit conormal vector $n$. Then for every $2 \leq k$, there exists a homogeneous formula for $\FF{k}$ given by
$$\FF{k} =\tfrac{1}{2} \iota^* \mathcal{L}_n^{k-1} g +  F_k(\bar{g}^{-1}, \iota^* \mathcal{L}_n g, \ldots, \iota^* \mathcal{L}_n^{k-2} g)\,.$$
\end{corollary}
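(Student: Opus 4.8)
The plan is to prove Corollary~\ref{cor-lies} by induction on $k$, using Lemma~\ref{lie-derivs} to strip off the leading Lie-derivative term and then recursively eliminating every fundamental form that appears in the ``products of lower order fundamental forms'' in favour of Lie derivatives of the metric. For the base case $k=2$ one has $\FF{2}=\II=\tfrac12\iota^*\mathcal{L}_n g$ by explicit computation, so $F_2=0$. For the inductive step I would assume the claim for all $2\le j<k$; by Lemma~\ref{lie-derivs}, $\FF{k}=\tfrac12\iota^*\mathcal{L}_n^{k-1}g$ plus a sum of contracted products of $\FF{2},\dots,\FF{k-1}$. Substituting the inductive expression $\FF{j}=\tfrac12\iota^*\mathcal{L}_n^{j-1}g+F_j(\cdots)$ into each such product replaces every fundamental form by an expression built from $\bar{g}^{-1}$ and the Lie derivatives $\iota^*\mathcal{L}_n g,\dots,\iota^*\mathcal{L}_n^{j-2}g$, and collecting all resulting terms defines $F_k$.

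The content of the argument is then to verify the two qualitative assertions that make this an honest ``homogeneous formula'': that only Lie derivatives up to order $k-2$ occur, and that the expression is homogeneous. Both follow from the weight grading of Remark~\ref{weights}, extended by assigning $\iota^*\mathcal{L}_n^{j}g$ the weight $2-j$, which is forced to be consistent with the weight $3-k$ of $\FF{k}$ since the two appear together as leading terms. With contractions weighted $-2$, every monomial in the identity then carries the common weight $3-k$, which is precisely homogeneity.

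For the order bound I would observe that each product of lower order fundamental forms produced by Lemmas~\ref{normal-derivs} and~\ref{lie-derivs} contains at least two factors, and that the smallest fundamental form $\II$ already carries one Lie derivative, so every factor contributes a Lie derivative of order at least one. A monomial that is a contracted product of $m$ factors $\iota^*\mathcal{L}_n^{j_i}g$ of output rank two uses $m-1$ contractions, hence has weight $2-\sum_i j_i$; matching this to $3-k$ forces $\sum_i j_i=k-1$. Combined with $m\ge 2$ and each $j_i\ge 1$, this yields $\max_i j_i\le (k-1)-(m-1)\le k-2$, exactly the claimed bound.

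The main obstacle I anticipate is bookkeeping rather than conceptual. One must confirm that the recursive substitution never reintroduces a fundamental form of order $\ge k$, which is guaranteed because Lemma~\ref{lie-derivs} produces only strictly lower fundamental forms; and that the ``at least two factors'' property survives the Leibniz expansion of $\iota^*\nabla_n^{k-2}[(\nabla_a n^c)(\nabla_c n_b)]$ arising in the proof of Lemma~\ref{normal-derivs}, since each factor $\nabla_n^p\nabla n$ is itself a fundamental form plus lower products. Tracking these facts through the induction, together with the weight count, produces the desired homogeneous formula for $F_k$.
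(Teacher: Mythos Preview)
Your proposal is correct and is essentially the same approach the paper takes: the paper merely asserts that the corollary follows from Lemma~\ref{lie-derivs} via a recursive algorithm, and you have written out that recursion explicitly, together with the weight-count justification (from Remark~\ref{weights}) for homogeneity and for the bound $\max_i j_i\le k-2$ on the Lie-derivative orders appearing in $F_k$. Your bookkeeping concerns are well founded but resolve exactly as you outline, since the lower-order terms produced in Lemmas~\ref{normal-derivs} and~\ref{lie-derivs} always carry at least two fundamental-form factors, which the weight grading also forces.
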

These formulas can be explicitly computed for any choice of $k$ simply by a recursive algorithm.

Lemma~\ref{lie-derivs} also implies that the metric in a sufficiently small neighborhood around a point in $\Sigma$ (where $\Sigma$ is analytically emebedded in $(M,g)$) is uniquely determined by the induced metric on $\Sigma$ and the infinite family of fundamental forms for the embedding:
\begin{proposition} \label{unique}
Let $\Sigma \hookrightarrow (M,g)$ be an analytic hypersurface embedding. Then, for any point $p \in \Sigma$, there exists a sufficiently small neighborhood $U_p$ around $p$ such that $g|_U$ is uniquely determined by the induced metric and the infinite family of fundamental forms.
\end{proposition}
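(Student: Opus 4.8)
The plan is to exploit the normal form of the metric together with the analyticity of the embedding to reconstruct $g$ near $p$ from its Taylor coefficients in the transverse direction. First I would fix a point $p \in \Sigma$ and pass to the normal-form coordinates $g = dt^2 + \bar{g}(t,x^k)_{ij}\,dx^i dx^j$ guaranteed by the discussion preceding Lemma~\ref{normal-derivs}, valid on some collar neighborhood. Because $\Sigma$ is analytically embedded in the analytic manifold $(M,g)$, the component functions $\bar{g}(t,x^k)_{ij}$ are real-analytic in $t$, so on a sufficiently small $U_p$ they are recovered by their convergent Taylor series in $t$ about $t=0$:
\begin{equation*}
\bar{g}(t,x^k)_{ij} = \sum_{m \geq 0} \frac{t^m}{m!}\, \big(\partial_t^m \bar{g}\big)(0,x^k)_{ij}\,.
\end{equation*}
Thus it suffices to show that every transverse Taylor coefficient $\iota^*\partial_t^m \bar{g}$ is determined by the induced metric $\bar g$ and the fundamental forms.

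The key step is to identify these transverse coefficients with Lie derivatives of $g$ along $n = \partial_t$. Since $n$ is the unit conormal and $\bar{g}$ is the pullback of $g$ with the $dt^2$ block fixed, one checks that along each slice $\iota^*\partial_t^m \bar{g} = \iota^*\mathcal{L}_n^m g$ for all $m \geq 0$; the $m=0$ term is the induced metric itself. Now I would invoke Corollary~\ref{cor-lies}: for each $k \geq 2$ it provides the homogeneous identity $\tfrac12 \iota^*\mathcal{L}_n^{k-1} g = \FF{k} - F_k(\bar g^{-1}, \iota^*\mathcal{L}_n g, \dots, \iota^*\mathcal{L}_n^{k-2} g)$, expressing the top Lie derivative in terms of $\FF{k}$ and strictly lower-order Lie derivatives contracted with $\bar g^{-1}$. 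Running this as a finite recursion in $m$ — with base case $\iota^*\mathcal{L}_n^0 g = \bar g$ and $\iota^*\mathcal{L}_n g = 2\,\II$ — each coefficient $\iota^*\mathcal{L}_n^m g$ is expressed as a universal polynomial in $\bar g^{-1}$ and the fundamental forms $\FF{2},\dots,\FF{m+1}$. Feeding these back into the Taylor series reconstructs $\bar g(t,x^k)_{ij}$, and hence $g|_{U_p}$, from the stated data.

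The main obstacle I anticipate is justifying that the reconstruction genuinely takes place in a full $d$-dimensional neighborhood $U_p$ rather than merely on the single slice $\Sigma$: the fundamental forms are tensors \emph{on} $\Sigma$, so a priori they only fix the jet of $g$ transverse to $\Sigma$ at points of $\Sigma$, and one must argue that analyticity upgrades this transverse jet data to genuine values on a neighborhood. This is precisely where analyticity is indispensable — it converts the formal transverse Taylor series into an honest convergent expansion on some $U_p$, and it guarantees that the tangential dependence on $x^k$ is likewise analytic and therefore controlled. A secondary subtlety is that Corollary~\ref{cor-lies} only handles $k \geq 2$, i.e. $m \geq 1$; the $m=0$ coefficient must be fed in separately as the induced metric $\bar g$, which is exactly one of the pieces of data we are allowed to use. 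Once these points are addressed, the finite recursion plus analytic continuation closes the argument.
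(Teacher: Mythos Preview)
Your proposal is correct and follows essentially the same route as the paper: pass to normal form, use analyticity to recover $\bar g(t,x^k)$ from its transverse Taylor coefficients, identify those coefficients with $\iota^*\mathcal{L}_n^m g$, and then invert Corollary~\ref{cor-lies} recursively to express each Lie derivative in terms of $\bar g^{-1}$ and the fundamental forms. The subtleties you flag (analyticity for the full neighborhood, and handling $m=0$ separately as the induced metric) are exactly the points the paper's proof implicitly uses.
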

\begin{proof}
Using Corollary~\ref{cor-lies}, one may invert the formula for each $\FF{m+1}$ to find a formula for each Lie derivative of $\iota^* \mathcal{L}_n^m g$ in terms of the $\{\bar{g}^{-1}, \II, \ldots, \FF{m+1}\}$. Now, such an embedding may always be expressed in terms of a normal form, so that
$$g = dt^2 + \bar{g}(t,x^k)_{ij} dx^i dx^j\,,$$
where the component functions $\bar{g}(t,x^k)_{ij}$ are analytic. But then $\bar{g}(t,x^k)$ is uniquely determined (in a sufficiently small neighborhood around a point $p \in \Sigma$) by its Taylor series expansion about $t = 0$. But because $n = \partial_t$ and $\partial_a \partial_t = 0$, it follows that $\iota^* \mathcal{L}_{n}^m g = (\partial_t^{m} \bar{g})(0,x^k)_{ij} dx^i dx^j$, and so $\bar{g}(t,x^k)$ is determined uniquely (in a sufficiently small neighborhood) by the family of tensors $\{\iota^* \mathcal{L}_n^m g\}_{m=0}^{\infty}$. However, as established, there exists a formula for each of these Lie derivatives in terms of $\bar{g}^{-1}$ and the fundamental forms. The proposotion follows.
\end{proof}

This uniqueness result is quite powerful, as it allows us to compare the fundamental forms of hypersurface embeddings to determine whether those hypersurface embeddings are equivalent.
\begin{theorem} \label{equal-metrics}
Let $\iota : \Sigma \hookrightarrow (M,g)$ and $\hat{\iota} : \Sigma \hookrightarrow (M,\hat{g})$ be two analytic hypersurface embeddings such that, for every $p \in \Sigma$,  $\iota(p) = \hat{\iota}(p)$. Then, in a collar neighborhood around $\Sigma$, $\hat{g} = g$ if and only if $\iota^* g = \hat{\iota}^* \hat{g}$ and, for every $k$, we have that $\FF{k} = \hat{\FF{k}}$.
\end{theorem}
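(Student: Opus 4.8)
The plan is to prove both directions of the biconditional, with the forward direction being essentially immediate and the reverse direction relying on the uniqueness result of Proposition~\ref{unique}. The key observation is that the two embeddings share the same image pointwise, so they can be compared via a common normal form.

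\medskip

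First I would dispatch the forward implication. If $\hat g = g$ in a collar neighborhood of $\Sigma$ and $\iota = \hat\iota$ pointwise, then the two embeddings are literally identical as maps into the same Riemannian manifold. Since every fundamental form $\FF{k}$ is constructed purely from $\iota$, $g$, and the Levi-Civita connection $\nabla$ of $g$ (via the curvature tensor, its normal derivatives, and the unit conormal $n$, all of which are intrinsic to the pair $(g,\iota)$), it follows tautologically that $\iota^* g = \hat\iota^* \hat g$ and $\FF{k} = \hat{\FF{k}}$ for all $k$. This direction requires no real work beyond noting that the fundamental forms are functionals of the metric and embedding alone.

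\medskip

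For the reverse implication, the strategy is to reduce to the uniqueness statement of Proposition~\ref{unique}. Suppose $\iota^* g = \hat\iota^* \hat g$ and $\FF{k} = \hat{\FF{k}}$ for every $k$. Because $\iota(p) = \hat\iota(p)$ for all $p \in \Sigma$, both embeddings have the same image hypersurface, so I may work with a single copy of $\Sigma \hookrightarrow M$ and two metrics $g, \hat g$ on a collar neighborhood that induce the same first fundamental form. Each metric admits its own normal form adapted to $\Sigma$,
\[
g = dt^2 + \bar g(t,x^k)_{ij}\, dx^i dx^j, \qquad \hat g = d\hat t^2 + \hat{\bar g}(\hat t, x^k)_{ij}\, dx^i dx^j,
\]
where the conormal directions $\partial_t$ and $\partial_{\hat t}$ are the respective unit conormals. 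Now I invoke Proposition~\ref{unique}: the germ of each metric at $p$ is uniquely determined by its induced metric together with its infinite family of fundamental forms. Since these data coincide for $g$ and $\hat g$ by hypothesis, the two metrics must agree in a sufficiently small neighborhood of each $p \in \Sigma$, and patching these neighborhoods gives $\hat g = g$ throughout a collar neighborhood of $\Sigma$.

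\medskip

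The main obstacle I anticipate is reconciling the two a priori distinct normal-form coordinate systems, since $g$ and $\hat g$ generate different geodesic flows off $\Sigma$ and hence different transverse coordinates $t$ and $\hat t$. The clean way around this is to lean entirely on Proposition~\ref{unique} as a black box rather than attempting to match coordinates directly: the proposition already encodes the passage from the coordinate-dependent Taylor data $\{\iota^* \mathcal{L}_n^m g\}$ to the invariant fundamental forms via the invertibility established in Corollary~\ref{cor-lies}. Thus it suffices to confirm that equality of induced metrics and of all $\FF{k}$ forces equality of all the Lie-derivative data $\iota^* \mathcal{L}_n^m g = \hat\iota^* \mathcal{L}_{\hat n}^m \hat g$, which is exactly the inverted recursion supplied by Corollary~\ref{cor-lies}; the analyticity hypothesis then upgrades agreement of all transverse Taylor coefficients to genuine metric equality on a neighborhood.
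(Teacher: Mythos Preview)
Your proposal is correct and follows essentially the same route as the paper: the forward direction is dismissed as trivial, and the reverse direction is reduced to Proposition~\ref{unique} together with the existence of the normal form. You supply considerably more detail than the paper's two-sentence proof, and you rightly flag the subtlety that the two metrics induce a priori different normal coordinates; the paper simply absorbs this into the phrase ``and the existence of the normal form'' without further comment.
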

\begin{proof}
First, suppose that $\hat{g} = g$. Then it follows trivially that all geometric quantities are equal, including the induced metric and the fundamental forms. On the other hand, the other direction follows directly from the uniqueness of Proposition~\ref{unique} and the existence of the normal form.
\end{proof}

As a consequence of Theorem~\ref{equal-metrics}, we can determine when a hypersurface embedding is actually a product manifold.
\begin{corollary} \label{prod-metric}
Let $\Sigma \hookrightarrow (M,g)$ be an analytic hypersurface embedding. Then $\FF{k} = 0$ for every $k \geq 2$ if and only if in a collar neighborhood $U := (-\epsilon,\epsilon) \times \Sigma$ of $\Sigma$, $(U,g|_U)$ is a product manifold.
\end{corollary}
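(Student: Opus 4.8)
The plan is to deduce Corollary~\ref{prod-metric} almost entirely from the uniqueness machinery already in place, specifically Theorem~\ref{equal-metrics} together with the normal form. Since the statement is an ``if and only if'', I would treat the two directions separately, and I expect the reverse direction (product implies vanishing) to be the routine one, while the forward direction (vanishing implies product) is where the real content lies.

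\medskip

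\noindent\emph{Reverse direction.} Suppose $(U,g|_U)$ is a product manifold in a collar neighborhood $U \cong (-\epsilon,\epsilon) \times \Sigma$. Then, choosing the product coordinate as the normal coordinate $t$, the metric takes the form $g = dt^2 + \bar{g}(x^k)_{ij}\,dx^i dx^j$ where $\bar{g}$ is \emph{independent} of $t$. I would invoke the normal-form identification $n = \partial_t$ established before Lemma~\ref{normal-derivs}, so that $\iota^* \mathcal{L}_n^m g = (\partial_t^m \bar{g})(0,x^k)_{ij}\,dx^i dx^j = 0$ for every $m \geq 1$. Feeding this into Corollary~\ref{cor-lies}, every $\FF{k}$ for $k \geq 2$ is a homogeneous expression in $\bar{g}^{-1}$ and the vanishing Lie derivatives $\iota^* \mathcal{L}_n g, \ldots, \iota^* \mathcal{L}_n^{k-1} g$; by homogeneity each summand contains at least one factor $\iota^* \mathcal{L}_n^m g$ with $m \geq 1$, hence vanishes. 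Therefore $\FF{k} = 0$ for all $k \geq 2$.

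\medskip

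\noindent\emph{Forward direction.} Now suppose $\FF{k} = 0$ for every $k \geq 2$. The natural strategy is to produce an explicit product metric $\hat{g}$ with the same induced metric and the same (vanishing) fundamental forms, and then invoke Theorem~\ref{equal-metrics} to conclude $g = \hat{g}$ on the collar. Concretely, I would define $\hat{g} := dt^2 + \iota^*g\,$, i.e.\ the $t$-independent product metric whose fiber slices all carry the fixed induced metric $\bar{g}(0,x^k)$, and take $\hat\iota$ to be the inclusion of $\Sigma$ as the slice $t=0$. By the reverse direction just proved, this product metric has all fundamental forms equal to zero and induced metric $\iota^* g$. Thus $\iota^* g = \hat\iota^* \hat g$ and $\FF{k} = 0 = \hat{\FF{k}}$ for every $k$, so Theorem~\ref{equal-metrics} gives $g = \hat g$ in a collar neighborhood, which is by construction a product manifold.

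\medskip

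The main obstacle, and the point deserving care, is the forward direction's implicit analyticity and neighborhood bookkeeping: Theorem~\ref{equal-metrics} and Proposition~\ref{unique} are stated for analytic embeddings and only guarantee agreement in a \emph{sufficiently small} neighborhood of each point, whereas the corollary asserts a product structure on a collar of $\Sigma$. I would therefore be careful to state the conclusion pointwise first and then note that the comparison metric $\hat{g}$ is manifestly analytic and globally defined on the collar, so the local agreements patch together to give $g = \hat g$ on all of $U$. One should also verify the mild consistency point that the constructed $\hat\iota$ satisfies the hypothesis $\iota(p) = \hat\iota(p)$ of Theorem~\ref{equal-metrics}, which holds because both realize $\Sigma$ as the $t=0$ slice in the shared normal-form coordinates.
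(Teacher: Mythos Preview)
Your proposal is correct and follows essentially the same approach as the paper: construct the comparison product metric $\hat g = dt^2 + \iota^* g$, observe it has vanishing fundamental forms and the same induced metric, and apply Theorem~\ref{equal-metrics}. The paper's proof is simply terser, declaring the reverse direction ``trivial'' and the vanishing of the product's fundamental forms ``clear,'' whereas you spell these out via Corollary~\ref{cor-lies} and add the collar-patching remark.
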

\begin{proof}
We define a product manifold on $(-\epsilon,\epsilon) \times \Sigma$ by the metric $g = dt^2 + \iota^* g$. Clearly, this product manifold has vanishing fundamental forms and the induced metric on $\Sigma$ agrees with that of $\Sigma \hookrightarrow (M,g)$. Hence by Theorem~\ref{equal-metrics}, these metrics agree. The other direction is trivial.
\end{proof}

\subsection{Fiber-like Hypersurface Embeddings} \label{fiber}
Now, when a hypersurface $\Sigma \hookrightarrow (M,g)$ can be viewed as the fiber in a warped product, there exists a choice of local coordinates on $M$ such that the metric takes the form
$$g = dt^2 + h(t) \bar{g}_{ij} dx^i dx^j\,,$$
where $\Sigma = \mathcal{Z}(t)$ is zero locus of $t$, 
coordinates $\{x^i\}$ are coordinates on $\Sigma$ parallel transported by $\partial_t$, 
$\bar{g}_{ij}$ depends only on $x^i$,
and $h \in C^\infty_+ (\mathbb{R})$. We call such a hypersurface embedding a \textit{fiber-like hypersurface embedding}.
Note this corresponds to  \eqref{warped} with $h = f^2$ and 1-dimensional base manifold.

We can then use the results of the previous section to determine when a hypersurface embedding is indeed a fiber-like hypersurface embedding. We thus have one of the main theorems of this article.
\begin{theorem} \label{fiber-theorem}
Let $\Sigma \hookrightarrow (M,g)$ be an analytic hypersurface embedding with $\bar{g} := \iota^* g$. Then, a collar neighborhood around $\Sigma$ is isometric to a fiber-like hypersurface embedding if and only if there exists analytic $h \in C^\infty_+(\mathbb{R})$ such that for every $k \geq 2$,
$$\FF{k} = \tfrac{1}{2} h^{(k-1)}(0) \, \bar{g} + F_k(\bar{g}^{-1}, h'(0) \,\bar{g}, \ldots, h^{(k-2)}(0)\, \bar{g})\,.$$
\end{theorem}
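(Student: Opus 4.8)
The plan is to prove both implications by reducing everything to Corollary~\ref{cor-lies}, which expresses each $\FF{k}$ as $\tfrac12\iota^*\mathcal{L}_n^{k-1}g$ plus a \emph{universal} function $F_k$ of $\bar g^{-1}$ and the lower pullback Lie derivatives $\iota^*\mathcal{L}_n^m g$. The whole statement then amounts to the assertion that the embedding is fiber-like precisely when each $\iota^*\mathcal{L}_n^m g$ is the scalar multiple $h^{(m)}(0)\,\bar g$ of the induced metric. Throughout I would normalize $h(0)=1$, which is harmless since the fiber-like form $g=dt^2+h(t)\bar g_{ij}dx^idx^j$ is invariant under $h\mapsto c\,h$, $\bar g_{ij}\mapsto c^{-1}\bar g_{ij}$; with this choice the profile $\bar g_{ij}$ coincides with $\bar g=\iota^*g$.

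For the forward direction, suppose the collar is isometric to a fiber-like embedding, so that in the adapted coordinates $n=\partial_t$ and $g=dt^2+h(t)\bar g_{ij}dx^idx^j$ with $\bar g_{ij}$ independent of $t$. Since $\mathcal{L}_{\partial_t}dt=0$ and $\partial_t\bar g_{ij}=0$, a one-line induction gives $\mathcal{L}_n^m g=h^{(m)}(t)\,\bar g_{ij}dx^idx^j$ for all $m\geq 1$, hence $\iota^*\mathcal{L}_n^m g=h^{(m)}(0)\,\bar g$. Substituting these into Corollary~\ref{cor-lies} yields exactly the claimed formula for $\FF{k}$.

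For the converse, I would build the candidate metric $\hat g:=dt^2+h(t)\,\bar g_{ij}dx^idx^j$ on the collar $(-\epsilon,\epsilon)\times\Sigma$ directly from the given analytic $h$ and the (analytic) induced metric $\bar g=\iota^*g$, embedding $\Sigma$ as $\{0\}\times\Sigma$. By the forward computation applied to $\hat g$, together with the \emph{universality} of the functions $F_k$ in Corollary~\ref{cor-lies}, the fundamental forms of $\hat g$ are $\tfrac12 h^{(k-1)}(0)\bar g+F_k(\bar g^{-1},h'(0)\bar g,\dots,h^{(k-2)}(0)\bar g)$, which by hypothesis equal the $\FF{k}$ of $g$; the induced metrics agree by construction. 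Since both $g$ (written in its normal form) and $\hat g$ are analytic metrics inducing the same embedding of $\Sigma$, Theorem~\ref{equal-metrics} forces $g=\hat g$ on the collar, so the embedding is fiber-like. Equivalently, one may argue directly: an induction on $k$ using Corollary~\ref{cor-lies} and the hypothesis shows $\iota^*\mathcal{L}_n^m g=h^{(m)}(0)\,\bar g$ for every $m$, and then the Taylor reconstruction in the proof of Proposition~\ref{unique} gives $\bar g(t,x)_{ij}=\sum_m \tfrac{t^m}{m!}h^{(m)}(0)\bar g_{ij}=h(t)\bar g_{ij}$.

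The main obstacle I anticipate is analytic bookkeeping rather than conceptual difficulty: one must ensure the candidate $\hat g$ is genuinely analytic (which follows from analyticity of $h$ and of the induced metric) and that the Taylor expansion of $\bar g(t,x)$ and the Taylor series of $h$ converge on a common collar, so that the reconstruction $\bar g(t,x)_{ij}=h(t)\bar g_{ij}$ holds on an honest neighborhood; shrinking $\epsilon$ as in Proposition~\ref{unique} handles this. The only other point requiring care is the normalization $h(0)=1$, needed so that the profile $\bar g_{ij}$ is literally $\iota^*g$ and the $m=0$ term stays consistent.
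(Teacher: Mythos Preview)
Your proposal is correct and follows essentially the same route as the paper: compute $\iota^*\mathcal{L}_n^m g=h^{(m)}(0)\bar g$ for the forward direction via Corollary~\ref{cor-lies}, and for the converse build the comparison metric $\hat g=dt^2+h(t)\bar g$ and invoke Theorem~\ref{equal-metrics}. Your explicit treatment of the normalization $h(0)=1$ (so that the induced metrics genuinely agree) and your alternative Taylor-reconstruction argument are minor but welcome clarifications beyond what the paper spells out.
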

\begin{proof}
First, suppose that $\Sigma \hookrightarrow (M,g)$ is an analytic fiber-like hypersurface embedding, meaning there exists some analytic $h \in C^\infty_+(\mathbb{R})$ such that, in the normal form,
$$g = dt^2 + h(t) \bar{g}_{ij} dx^i dx^j\,.$$
Clearly, because $n = \partial_t$, it follows that $\iota^* \mathcal{L}_n^{m} g = \iota^* (\partial_t^{m} h) \bar{g}$.
But from Corollary~\ref{cor-lies}, it follows that
$$\FF{k} =\tfrac{1}{2} \iota^* \mathcal{L}_n^{k-1} g +  F_k(\bar{g}^{-1}, \iota^* \mathcal{L}_n g, \ldots, \iota^* \mathcal{L}_n^{k-2} g) = \tfrac{1}{2} h^{(k-1)}(0)\, \bar{g} +  F_k(\bar{g}^{-1}, h'(0)\, \bar{g}, \ldots, h^{(k-2)}(0) \, \bar{g})\,.$$
This gives us one direction.

On the other hand, suppose that there exists an analytic $h \in C^\infty_+(\mathbb{R})$ such that for every $k \geq 2$, 
$$\FF{k} = \tfrac{1}{2} h^{(k-1)}(0) \, \bar{g} + F_k(\bar{g}^{-1}, h'(0) \,\bar{g}, \ldots, h^{(k-2)}(0)\, \bar{g})\,.$$
Now, the metric $g$ is expressible in normal form as $g = dt^2 + \bar{g}(t,x^k)_{ij} dx^i dx^j$. So,  define a metric on a collar neighborhood of $\Sigma$ by $\hat{g} = dt^2 + h(t) \bar{g}(0,x^k)_{ij} dx^i dx^j$. However, from the above considerations, this metric $\hat{g}$ induces fundamental forms that agree exactly with those of the original embedding. Hence by Theorem~\ref{equal-metrics}, $\hat{g} = g$, and so $\Sigma \hookrightarrow (M,g)$ is isometric to a fiber-like hypersurface embedding in a collar neighborhood.
\end{proof}

\begin{remark}
Explicitly, one may show that, in the language of the above theorem,
$$\III_{ab} = \tfrac{1}{2} h''(0) \bar{g}_{ab} - \tfrac{1}{4} \bar{g}^{cd} (h'(0) \bar{g}_{ac}) (h'(0) \bar{g}_{bd}) = \left[\tfrac{1}{2} h''(0) - \tfrac{1}{4} (h'(0))^2)\right] \bar{g}_{ab}\,. $$
\end{remark}

\subsection{Base-like Hypersurface Embeddings} \label{base}

Warped product manifolds with a hypersurface embedding $\Sigma \hookrightarrow (M,g)$ as the base manifold have a much richer structure. In this case, there exists some function $f \in C^\infty_+ (\Sigma)$ and some coordinates $(t,x^i)$ such that
$$g = f(x^i)^2 dt^2 + \bar{g}_{ij} dx^i dx^j\,.$$
Here $\Sigma = \mathcal{Z}(t)$ is the zero locus of $t$,
coordinates $\{x^i\}$ are coordinates on $\Sigma$ parallel transported by $\partial_t$, and $\bar{g}_{ij}$ 
depends only on $x^i$.
We call these manifolds \textit{base-like hypersurface embeddings}. Such embeddings can also be characterized in terms of fundamental forms, but their behavior is much more subtle. Note this corresponds to  \eqref{warped} with  a $(d-1)$-dimensional base manifold.

First, we prove that for any base-like hypersurface embedding, the even fundamental forms vanish.
\begin{theorem} \label{vanishing-evens}
Let $\Sigma \hookrightarrow (M,g)$ be a base-like hypersurface embedding with warping function $f$. Then, $\FF{k} = 0$ for all $k = 2n$ for positive integers $n$.
\end{theorem}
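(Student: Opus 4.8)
The plan is to exploit the reflection symmetry that is built into a base-like metric. Working in the adapted coordinates $(t,x^i)$ in which $g = f(x^i)^2 dt^2 + \bar g_{ij}\,dx^i dx^j$ on a collar $(-\epsilon,\epsilon)\times\Sigma$, I would first observe that the involution $\phi(t,x^i) := (-t,x^i)$ is an isometry: since $f$ and $\bar g_{ij}$ are independent of $t$ and $g$ depends on $t$ only through $dt^2$, one has $\phi^* g = g$. Moreover $\phi$ fixes $\Sigma = \mathcal{Z}(t)$ pointwise, so $\phi\circ\iota = \iota$, and its differential restricts to the identity on $T\Sigma$ while sending $\partial_t\mapsto-\partial_t$.

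Next I would pin down the conormal. A direct computation gives $g^{tt}=f^{-2}$, so the unit normal vector is $n = f^{-1}\partial_t$; as $f$ depends only on $x^i$, this field is genuinely anti-invariant under the involution, $\phi_* n = -n$, on the whole collar.

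The heart of the argument is a naturality and parity count. By Definition~\ref{ff-def} each fundamental form is the $\iota$-pullback of a $(0,2)$-tensor $T^{(k)}_{ab} = n^{a_1}\cdots n^{a_{k-3}}n^c n^d\,\nabla_{a_1}\cdots\nabla_{a_{k-3}}R_{cabd}$ built naturally from $g$, $\nabla$, $R$ together with exactly $k-1$ copies of the (co)normal $n$; the low cases $\II=\iota^*\nabla_a n_b$ and $\III = \iota^*(n^c n^d R_{cabd})$ likewise carry $k-1$ factors of $n$. Because $\phi$ is an isometry it fixes $g$, $\nabla$ and $R$, so $\phi^*$ acts on $T^{(k)}$ only through the $n$-factors; with $\phi_* n = -n$ and homogeneity of degree $k-1$ in $n$, this yields $\phi^* T^{(k)} = (-1)^{k-1} T^{(k)}$. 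On the other hand $\phi\circ\iota=\iota$ forces $\iota^*\phi^* T^{(k)} = (\phi\circ\iota)^* T^{(k)} = \iota^* T^{(k)} = \FF{k}$. Comparing the two evaluations gives $\FF{k} = (-1)^{k-1}\FF{k}$, so $\FF{k} = 0$ whenever $k$ is even, in particular for every $k = 2n$.

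The main obstacle, and really the only delicate point, is making the naturality statement $\phi^* T^{(k)} = (-1)^{k-1}T^{(k)}$ fully rigorous: one must track that an isometry commutes with $\nabla$ and preserves $R$, that it acts as the identity on $T^*\Sigma$ while flipping the conormal, and that precisely $k-1$ (odd, for even $k$) factors of $n$ are present so the net sign is $-1$. I would also note that the contracted $n$'s in $T^{(k)}$ are undifferentiated, so $\FF{k}$ depends only on $n|_\Sigma$; using the anti-invariant extension $n=f^{-1}\partial_t$ removes any ambiguity from the choice of normal extension. As a sanity check one verifies directly that $\II_{ij} = -\Gamma^t_{ij}\,f = 0$ in these coordinates, matching the $k=2$ instance of the statement.
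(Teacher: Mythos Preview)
Your argument is correct and genuinely different from the paper's. The paper never invokes the reflection isometry $\phi(t,x)=(-t,x)$; instead it laboriously builds, by an order-by-order induction, a coordinate change from $(t,x^i)$ to an asymptotic geodesic normal form $(s,y^i)$ in which $g_{ss}$ and $\gamma_{ij}$ are even in $s$ while $g_{si}$ is odd, then uses Lemma~\ref{lie-derivs} to conclude that $\iota^*\mathcal L_n^{k}g=0$ for $k$ odd, and finally appeals to the weight bookkeeping of Remark~\ref{weights} to kill the lower-order terms and deduce $\FF{k}=0$ for $k$ even. Your route short-circuits all of this: the base-like metric is \emph{already} invariant under $t\mapsto -t$, so naturality of $\nabla^{m}R$ under isometries together with the count of exactly $k-1$ normal insertions in Definition~\ref{ff-def} gives $\FF{k}=(-1)^{k-1}\FF{k}$ in one line. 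This is more conceptual and avoids both the inductive coordinate construction and the weight argument. What the paper's approach buys is an explicit asymptotic normal form for a base-like metric (with controlled parity to every order), which is of some independent interest, though the paper does not exploit it later. Your only delicate point---that for $k=2$ the normal is differentiated---is handled correctly by choosing the globally $\phi$-anti-invariant extension $n=f^{-1}\partial_t$, so that $\phi^*(\nabla_a n_b)=\nabla_a(\phi^* n_b)=-\nabla_a n_b$; for $k\geq3$ the argument is indeed pointwise at $p\in\Sigma$ and needs only $d\phi_p n_p=-n_p$ and $d\phi_p|_{T_p\Sigma}=\mathrm{id}$.
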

\begin{proof}
As $(M,g)$ is a warped product, in a coordinate system adapted to the warped product $(t, x^i)$, the metric is expressible as
$$g = f(x^1, \ldots, x^{d-1})^2 dt^2 + \bar{g}_{ij}(x^1, \ldots, x^{d-1}) dx^i dx^j\,.$$

Now we wish to change coordinates so that $g$ is asymptotically in normal form, i.e. so that
$$g = F^{(n+2)}(s,y^j) ds^2 + G_i^{(n+1)}(s,y^j) ds dy^i + \gamma_{ij}(s,y^1,\ldots,y^{d-1}) dy^i dy^j\,,$$
where $n \geq 0$ is even and $F^{(n+2)}(s,y^j) = 1 + \mathcal{O}(s^{n+2})$ is an even function of $s$. Similarly, we will require that $G_i^{(n+1)} = \mathcal{O}(s^{n+1})$ is an odd function of $s$.

To find such a coordinate transformation, we solve order-by-order inductively. The base case is fairly easy, with $n=0$: define $t = \frac{s}{f(y^i)}$ and $x^i = y^i$. Then, we have that
$$g = ds^2 - \frac{2s \partial_{y^i} f}{f}  dy^i ds + \gamma_{ij} dy^i dy^j \,,$$
for some $\gamma_{ij}$. Clearly, here $F^{(2)}(s,y^j) = 1$ is even and $G_i^{(1)}(s,y^j) = -\frac{2s \partial_{y^i} f}{f}$ is odd in $s$.
We now use induction to proceed. 

Suppose that $n \geq 0$ is even, that there exists $T_{n+1}(s,y^j) = \frac{s}{f(y^j)} + \cdots + Q_{n+1}(y^j) s^{n+1}$ an odd polynomial in $s$, and $X^i_{n}(s,y^j) = y^i + \cdots + P^i_{n}(y^j) s^{n}$ an even function of $s$ such that
$$g = F^{(n+2)}(s,y^j) ds^2 + G_i^{(n+1)}(s,y^j) ds dy^i + \gamma_{ij} dy^i dy^j$$
when we use coordinates $t = T_{n+1}(s,y^j)$ and $x^i = X^i_{n}(s,y^j)$, where $F^{(n+2)}$ is an even function of $s$ (with first coefficient 1) and $G_i^{(n+1)}$ is an odd function of $s$. When $n=0$, this is the base case.

We now increment $n \rightarrow n+2$. Let $t = T_{n+1}(s,y^j) + s^{n+3} A_{n+3}(y^j)$ and $x^i = X^i_{n}(s,y^j) + s^{n+2} B^i_{n+2}(y^j)$, with $A_{n+3}$ and $B^i_{n+2}$ arbitrary functions of $\vec{y}$. We would like to show that there exist unique $A_{n+3}$ and $B^i_{n+2}$ such that $g_{ss} = F^{(n+4)}$ and $g_{si} = G^{(n+3)}_i$. To do so, we may compute components of the metric:
\begin{align*}
g_{si} =& G^{(n+1)}_i +  (n+2)  s^{n+1}\gamma_{jk} (\partial_{y^i} X_n^j) B^k_{n+2} \\
&+ s^{n+2} \left((n+3) g_{tt} A_{n+3} \partial_{y^i} T_{n+1} + h_{jk} (\partial_s X^j_n)\partial_{y^i} B^k_{n+2} \right) \\
&+ s^{n+3} g_{tt} (\partial_s T_{n+1}) \partial_{y^i} A_{n+3} \\
&+ (n+2) s^{2n+3} h_{jk} B^j_{n+2} \partial_{y^i} B^k_{n+2} + (n+3) s^{2n+5} g_{tt} A_{n+3}  \partial_{y^i} A_{n+3} \,.
\end{align*}
Because $h_{jk}$ is invertible, we may fix $B^i_{n+2}$ so that the second term cancels with the leading term of $G^{(n+1)}_i$. Because $G^{(n+1)}_i$ is odd and $X^j_n$ is even (satisfying $\partial_{y^i} X^j_n = \delta_i^j + \mathcal{O}(s^2)$), the remaining higher-order terms are shunted to order $n+3$ and remain odd. Then, note that because both $T_{n+1}$ and $\partial_s X^j_n$ are odd in $s$, we see that the second line is actually odd and is of order $s^{n+3}$. Indeed, it follows that $g_{si}$ is odd and of order $s^{n+3}$. Now having fixed $B^i_{n+2}$, we may consider $g_{ss}$:
\begin{align*}
g_{ss} =& 1 + 2(n+2) s^{n+1} h_{jk} B^j_{n+2} \partial_s X^k_n \\
&+ (F^{(n+2)} - 1)  + 2(n+3) s^{n+2} g_{tt} A_{n+3} \partial_s T_{n+1} \\
&+ (n+2)^2 s^{2n+2} h_{jk} B^j_{n+2} B^k_{n+2} + (n+3)^2 s^{2n+4} g_{tt} A_{n+3}^2 \,.
\end{align*}
Now because $B^i_{n+2}$ is fixed and $\partial_s X^k_n$ is odd, the second term on the first line is shunted to the second line (and remains even). Thus, noting that $\partial_s T_{n+1}|_{s = 0} \neq 0$, we may choose $A_{n+3}$ uniquely so that the second line vanishes to leading order. But again, $\partial_s T_{n+1}$ is even as is $F^{(n+2)}$, so all higher-order terms remain even. Thus, the induction holds.

Given such a coordinate transformation, we can now determine the parity of $\gamma_{ij}$. However, this is trivially the case: $\frac{dt}{dy^i}$ is odd, so $\frac{dt}{dy^i} \frac{dt}{dy^j}$ is even. Similarly, $\frac{dx^a}{dy^i} \frac{dx^b}{dy^j}$ is even. Therefore, we find that $\gamma_{ij}$ is an even function of $s$. Taking stock of this coordinate transform, we have that
$$g = ds^2 + (\gamma^{(0)}_{ij} + s^2 \gamma^{(2)}_{ij} + \cdots) dy^i dy^j + \mathcal{O}(s^m)$$
for any choice of $m$. Therefore, we have that in these coordinates, $n^a =  \partial_s + \mathcal{O}(s^m)$, and so $\mathcal{L}_{n} T = \partial_s T + \mathcal{O}(s^{m-1})$ for any tensor $T$. So, for any $k \leq m$, we have that $\iota^* \mathcal{L}_n^k g = \iota^* \partial_s^k g$. Notably, for any $k$ odd, we have that we may always find a coordinate transform so that
$$\iota^* \partial_s^k g = 0\,.$$

Now recall from Lemma~\ref{lie-derivs} that $\iota^* \mathcal{L}_n^k g$ is proportional to the $(k+1)$th fundamental form, modulo lower order fundamental forms. So it follows that when $k$ is odd,
$$ \FF{k+1} = \text{lower order fundamental forms.}$$
Here, as per Remark~\ref{weights}, each summand on the right hand side of the above display has odd weight $2-k$. But odd fundamental forms always have even weight, as does $\bar{g}^{-1}$. It thus follows that no summand on the right hand side of the above display is composed entirely of odd fundamental forms. Hence, it follows that if $\FF{m}$ vanishes for each $2 \leq m \leq k-1$ even, then it also follows that $\FF{k+1}$ vanishes. This completes the induction and hence the proof.

\end{proof}

This theorem shows that the first potentially non-vanishing fundamental form for a base-like hypersurface embedding is $\III$. It follows from the definition of $\III$ and~\cite[Proposition 42]{oneill1983} that
$$\III = f^{-1} \bar{\nabla}^2 f\,.$$
This observation is essential for the results that follow.

\begin{proposition} \label{warped-product-isometry}
Let $\Sigma \hookrightarrow (M,g)$ be a base-like hypersurface embedding. Furthermore, let $\III = 0$. Then, in a neighborhood around any point $p \in \Sigma$, $(M,g)$ is isometric to a product manifold.
\end{proposition}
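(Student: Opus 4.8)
The plan is to reduce the statement to two ingredients: the explicit formula $\III = f^{-1}\bar{\nabla}^2 f$ recorded immediately before the proposition, and the local product splitting induced by a function with parallel gradient. Since $f \in C^\infty_+(\Sigma)$ is strictly positive, the hypothesis $\III = 0$ is equivalent to $\bar{\nabla}_a \bar{\nabla}_b f = 0$ on $\Sigma$; that is, the vector field $V := \bar{g}^{ab}\bar{\nabla}_b f$ is parallel for the Levi-Civita connection of $\bar{g}$. I would split the argument according to whether $V$ vanishes at $p$. If $V \equiv 0$ near $p$, then $f$ is locally constant, say $f \equiv c_0 > 0$, and the rescaling $t \mapsto c_0 t$ turns $g = f^2 dt^2 + \bar{g}$ into $d\tau^2 + \bar{g}$, which is already a product metric; this case is immediate.

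The substantive case is $V(p) \neq 0$. A nonvanishing parallel vector field has constant length $c := |V| \neq 0$ and is Killing (its covariant derivative, being the symmetric Hessian of $f$, vanishes identically). Moreover its orthogonal distribution $V^{\perp}$ is parallel: for $W \perp V$ one has $\langle \bar{\nabla}_X W, V\rangle = X\langle W,V\rangle - \langle W, \bar{\nabla}_X V\rangle = 0$. A parallel distribution is integrable with totally geodesic leaves, so by the local de Rham splitting there is, near $p$, an isometry
$$(\Sigma, \bar{g}) \;\cong\; (I \times N,\; ds^2 + g_N)$$
in which $\partial_s = V/c$, the metric $g_N$ is independent of $s$, and $f$ restricts to the affine function $f = cs + d$ for a genuine constant $d$ (since $\mathrm{grad}\,f = V$ is orthogonal to $TN$, $f$ is constant on each leaf, and $\tfrac{d}{ds}f = \langle V, V\rangle/c = c$). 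Substituting this into the warped metric gives, on a neighborhood of $p$,
$$g = (cs+d)^2\, dt^2 + ds^2 + g_N\,.$$

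It then remains to recognize the two-dimensional factor $(cs+d)^2 dt^2 + ds^2$ as a flat metric in disguise. Writing $r := s + d/c = f/c$, which is strictly positive precisely because $f > 0$, and $\theta := ct$, this factor becomes $dr^2 + r^2 d\theta^2$; the coordinate singularity at $r = 0$ is never reached since $r = f/c > 0$. Passing to $X = r\cos\theta$, $Y = r\sin\theta$ converts it into $dX^2 + dY^2$. Because the change of variables involves only $(t,s)$, while $g_N$ depends only on the $N$-coordinates and the original metric has no cross terms between $(t,s)$ and $N$, the full metric becomes $g = dX^2 + dY^2 + g_N$ on a neighborhood of $p$. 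This is manifestly the Riemannian product of a flat planar chart with $(N,g_N)$, establishing the claimed local isometry.

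The main obstacle I anticipate lies entirely in the non-constant case: correctly extracting the local product splitting of $(\Sigma,\bar{g})$ from $\bar{\nabla}^2 f = 0$, and then identifying the resulting warped two-plane $(cs+d)^2 dt^2 + ds^2$ as the flat metric $dX^2 + dY^2$ written in polar coordinates. Once these two observations are in place, verifying that no cross terms are introduced and that the positivity of $f$ keeps us away from the polar singularity is routine bookkeeping.
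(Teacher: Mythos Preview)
Your proof is correct and follows essentially the same route as the paper: from $\III=0$ deduce $\bar\nabla^2 f=0$, use the parallel gradient to split $(\Sigma,\bar g)$ locally as a line times a leaf, and then recognize the two-dimensional factor $f^2\,dt^2+df^2$ as a flat plane in polar coordinates. Your version is in fact slightly more careful than the paper's, since you explicitly treat the case where $\bar\nabla f$ vanishes at $p$ (hence $f$ is locally constant), whereas the paper implicitly uses $f$ as a coordinate without commenting on this degenerate case.
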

\begin{proof}
First, note that if $\III = 0$, then $\bar{\nabla}^2 f = 0$. It therefore follows that both $\bar{\nabla} f$ and $\iota^* g - df \otimes df$ are parallel with respect to $\bar{\nabla}$. Therefore, for any point $p \in \Sigma$, there exists a neighborhood $\bar{U} \subset \Sigma$ around $p$ with coordinates $(f,y^m)$ such that the induced metric satisfies
$$\iota^* g|_{\bar{U}} = df^2 + \bar{\gamma}_{mn} dy^m dy^n\,.$$
Now, by parallel transporting these coordinates transverse to $\Sigma$, these coordinates are defined on a small neighborhood $U \subset M$ around $p$ such that the metric becomes
$$g|_U = f^2 dt^2 + df^2 + \bar{\gamma}_{mn} dy^m dy^n\,.$$

But then the standard coordinate transformation to cartesian coordinates $f = \sqrt{u^2 + v^2}$ and $t = \arctan \tfrac{v}{u}$ yields a product metric (at least locally):
$$g = dv^2 + du^2 + \bar{\gamma}_{mn} dy^m dy^n\,,$$
and here $\Sigma = \mathcal{Z}(v)$ is the zero locus of $v$. The proposition follows.
\end{proof}

But from this proposition and from Corollary~\ref{prod-metric}, it follows that each fundamental form vanishes, which suggests that the odd fundamental forms must be expressible as homogeneous operators on $\III$. Indeed, we have the following proposition:
\begin{proposition} \label{diff-op-FFs}
Let $\Sigma \hookrightarrow (M,g)$ be a base-like hypersurface. Then, there exists an infinite family $\{O^f_n\}_{n=1}^{\infty}$ of differential operators on $\Gamma(\odot^2 T^* \Sigma)$ depending on $f$ such that
$$\FF{2n+3} = O^f_n (\III)\,.$$
Furthermore, this operator satisfies $O^f_n(0) = 0$.
\end{proposition}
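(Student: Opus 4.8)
The plan is to leverage the fact that, for a base-like embedding, each $\FF{2n+3}$ is a \emph{universal natural expression} in the jets of the pair $(\bar{g},f)$, and then to convert that expression into an operator acting on $\III$ by systematically trading second- and higher-order derivatives of the warping function for $\III$ via the identity $\bar\nabla^2 f = f\,\III$ (which is equivalent to the displayed formula $\III = f^{-1}\bar\nabla^2 f$). The condition $O^f_n(0)=0$ will then be extracted from the product criterion of Proposition~\ref{warped-product-isometry}.

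First I would record the structural input. Since the base-like metric $g = f^2 dt^2 + \bar{g}$ is built naturally from $(\bar{g},f)$, Corollary~\ref{cor-lies} writes each fundamental form as a fixed polynomial in $\bar{g}^{-1}$ and the pulled-back Lie derivatives $\iota^*\mathcal{L}_n^m g$, and each of those is in turn a natural polynomial in $\bar{g}^{-1}$, $f^{-1}$, the iterated covariant derivatives $\bar\nabla^{(j)}f$, and the base curvature $R_{\bar{g}}$ together with its covariant derivatives. Thus $\FF{2n+3} = \Phi_n[\bar{g};f]$ for a single universal $\Phi_n$. The weight/transverse-order bookkeeping of Remark~\ref{weights} shows moreover that $\Phi_n$ has total differential order $2n+2$ in these jets, consistent with $\III=\FF{3}$ having order $2$ and with Theorem~\ref{vanishing-evens} removing all even forms.

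Next I would build $O^f_n$. Because $\bar\nabla^2 f = f\,\III$, every occurrence of a derivative $\bar\nabla^{(m)}f$ with $m\geq 2$ inside $\Phi_n$ may be rewritten as $\bar\nabla^{(m-2)}(f\,\III)$, i.e.\ as a differential operator in $\III$ whose coefficients depend only on $f$, $\bar\nabla f$, $\bar{g}$ and $R_{\bar{g}}$. Performing this substitution throughout $\Phi_n$ yields an operator $O^f_n$ on $\Gamma(\odot^2 T^*\Sigma)$ with $\FF{2n+3}=O^f_n(\III)$ by construction. Setting $\III=0$ annihilates every monomial of $\Phi_n$ that contains a factor $\bar\nabla^{(\geq 2)}f$, so $O^f_n(0)$ reduces to the sum of the monomials built solely from $f$, $\bar\nabla f$, $\bar{g}$ and $R_{\bar{g}}$. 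On a flat base with affine warping $f = a + v_i x^i$ (so $\bar\nabla^2 f\equiv 0$, $R_{\bar{g}}=0$, and $\bar\nabla f = v$ an arbitrary constant covector) the embedding is a genuine product, so Proposition~\ref{warped-product-isometry} (equivalently Corollary~\ref{prod-metric}) forces $\FF{2n+3}=0$; since every remaining monomial already vanishes, this kills each $f^{-p}(\bar\nabla f)^{\otimes(2n+2)}$-contraction as $v$ is arbitrary. This settles $O^f_n(0)=0$ whenever the base is flat.

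The hard part will be the curved base. The order budget of $2n+2$ does \emph{not} rule out monomials that pair base curvature (or its derivatives) with \emph{only} first derivatives of $f$ and no Hessian factor---the prototype being a term such as $(\bar\nabla R_{\bar{g}})\csdot\bar\nabla f$ that first becomes available in $\FF{5}$ through the commutator $[\bar\nabla,\bar\nabla]\bar\nabla^{(2)}f$ when reorganizing $\bar\nabla^{(4)}f$. Such terms vanish identically on every product configuration (where $\bar\nabla f$ is $\bar\nabla$-parallel and hence annihilated by $R_{\bar{g}}$), so they are \emph{not} excluded by the vanishing criterion alone, yet they would spoil $O^f_n(0)=0$ if present. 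The crux is therefore to prove that these ``curvature-on-gradient'' contributions genuinely cancel in $\FF{2n+3}$. I expect this to follow from the special structure of the warped-product Riemann tensor---namely that the doubly-vertical component $n^c n^d R_{cabd}$ contracted in the definition is purely the Hessian block (no $R_{\bar{g}}$), cf.~\cite[Proposition~42]{oneill1983}---together with the second Bianchi identity governing its vertical covariant derivatives; granting this cancellation, the division step and the flat-base test above complete the proof.
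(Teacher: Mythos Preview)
Your strategy is essentially the paper's: write $\FF{2n+3}$ as a universal natural polynomial in the jets of $(\bar g,f)$, use the constant--$f$ case to see that every monomial carries at least one derivative of $f$, collect the monomials containing at least one factor $\bar\nabla^{(\geq 2)}f$ into the operator $O^f_n$ (via the substitution $\bar\nabla^2 f=f\,\III$), and then argue that the residual piece $P(\bar\nabla f)$---built only from $\bar R$, its derivatives, $f$ and $\bar\nabla f$---vanishes identically.

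Where you and the paper diverge is on this last step. The paper does not run a flat-base test; instead it applies Proposition~\ref{warped-product-isometry} to an \emph{arbitrary} pair $(\bar g,f)$ with $\bar\nabla^2 f=0$ (so the embedding is a product and $\FF{2n+3}=0$), deduces $P(\bar\nabla f)=0$ on every such configuration, and then asserts that because $\bar\nabla f$ is \emph{independent} of $\bar\nabla^2 f$ and $P$ does not see the $2$-jet of $f$, this forces $P\equiv 0$. The concern you raise---that the global condition $\bar\nabla^2 f=0$ entails the integrability constraint $\bar R_{abc}{}^{\,d}\bar\nabla_d f=0$, so curvature-on-gradient monomials such as $\bar R_{acbd}\bar\nabla^c f\,\bar\nabla^d f$ are not obviously excluded by this test---is a legitimate subtlety that the paper does not discuss; it treats the jet-independence claim as self-evident. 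Thus the ``hard part'' you flag in your own argument is precisely the step the paper dispatches in one sentence. You are more scrupulous in identifying it, but you do not resolve it either: your proposed fix via the explicit O'Neill warped-product curvature formulas and the second Bianchi identity is only sketched, not carried out.
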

\begin{proof}
First, note that $\FF{2n+3}$ has a local, differential formula expressible entirely in terms of $f$, $\bar{g}$, their partial derivatives, and $\bar{g}^{-1}$. Now, as such objects are tensorial, we  can instead write such objects using formulas constructed entirely from $f$, its (hypersurface) covariant derivatives, the metric $\bar{g}$ and its inverse, and the curvature tensor $\bar{R}$.

Now in the case where $f$ is constant, we trivially have from Corollary~\ref{prod-metric} and Proposition~\ref{warped-product-isometry} that $\FF{2n+3} = 0$. Therefore, any such formula for $\FF{2n+3}$ cannot contain terms that only depend on $\bar{R}$ and $f$ --- i.e. all summands in such a formula must contain one or more derivatives of $f$. Thus, we may decompose any such formula into the following pieces:
$$\FF{2n+3} = O(\bar{\nabla}^2 f) + P(\bar{\nabla} f)\,,$$
where $O$ is some homogeneous (with respect to rescaling $f$ by a constant) differential operator and $P$ is a (potentially non-linear) homogeneous algebraic operator composed of the curvature tensor $\bar{R}$, its derivatives, the metric, its inverse, and $f$.

Now consider the case where $\III = 0$, and thus $\bar{\nabla}^2 f = 0$. Because $\FF{2n+3} = 0$, it therefore follows that
$$P(\bar{\nabla} f) = 0\,.$$
However, $\bar{\nabla} f$ does not necessarily vanish (as it is independent of $\bar{\nabla}^2 f$). But because $P$ is an algebraic operator that vanishes for \textit{any} choice of $f$ satisfying $\bar{\nabla}^2 f = 0$, we must have that $P = 0$ identically. It thus follows that $\FF{2n+3} = O(\bar{\nabla}^2 f) =: O^f_n(\III)$. As $O$ is homogeneous, it trivially follows that $O^f_n(0) = 0$.
%
%
\end{proof}

\begin{remark}
As a first example, one may show, by direct computation, that for base-like hypersurface embeddings,
$$\V_{ab} =  f^{-1} (\bar{\nabla}^c f )(-3 \bar{\nabla}_c \III_{ab} + 2 \bar{\nabla}_{(a} \III_{b)c})\,.$$
Surprisingly, the operator $O^f_1$ is quite simple.
\end{remark}

From Proposition~\ref{diff-op-FFs}, we finally have the following theorem.
\begin{theorem} \label{isometric-base}
Let $\Sigma \hookrightarrow (M,g)$ be an analytic hypersurface embedding. Further, suppose that $\FF{2n} = 0$ for each $n$, that there exists a positive analytic function $f : \Sigma \hookrightarrow \mathbb{R}$ satisfying $\bar{\nabla}^2 f = f \III$, and that $\FF{2n+3} = O^f_n (\III)$ for each $n$. Then, in a neighborhood $U$ of a point in $\Sigma$, $(U,g|_U)$ is isometric to a warped product manifold with base manifold $U \cap \Sigma$.
\end{theorem}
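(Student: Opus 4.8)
The plan is to build an explicit candidate warped product metric $\hat{g}$ directly from the intrinsic data $(\bar{g}, f)$ carried by $\Sigma$, and then to invoke the rigidity statement of Theorem~\ref{equal-metrics}. Fix $p \in \Sigma$ and a neighborhood $\bar{U} \subset \Sigma$ on which $f$ and $\bar{g}$ are defined. Working in the normal form $g = dt^2 + \bar{g}(t,x)_{ij}\,dx^i dx^j$ on a collar $V \cong (-\epsilon,\epsilon) \times \bar{U}$ of $\Sigma$, I would define on the \emph{same} collar the metric
$$\hat{g} := f(x)^2\, dt^2 + \bar{g}(0,x)_{ij}\, dx^i dx^j\,,$$
with $f$ and $\bar{g}(0,x)$ independent of $t$. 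Since $f$ is positive and analytic and $\bar{g}$ is analytic, $\hat{g}$ is analytic, and by construction it realizes a base-like hypersurface embedding of $\Sigma$ (as the zero locus of $t$) with base $\Sigma$. Crucially, $g$ and $\hat{g}$ live on the same collar $V$ with the same point map $\iota$, and both induce the metric $\bar{g}$ on $\Sigma$, so the comparison hypotheses of Theorem~\ref{equal-metrics} are met as soon as all fundamental forms are shown to agree.

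The second step is to match the fundamental forms order by order. The even forms vanish for both metrics: for $g$ this is the hypothesis $\FF{2n} = 0$, and for $\hat{g}$ it is Theorem~\ref{vanishing-evens}, applicable since $\hat{g}$ is base-like. For the third fundamental form, the O'Neill identity $\III = f^{-1}\bar{\nabla}^2 f$ holds for the base-like embedding $\hat{g}$, giving $\hat{\III} = f^{-1}\bar{\nabla}^2 f$; meanwhile the hypothesis $\bar{\nabla}^2 f = f\III$ says exactly that the third fundamental form of $g$ is $f^{-1}\bar{\nabla}^2 f$ as well, so $\III = \hat{\III}$. For the higher odd forms, Proposition~\ref{diff-op-FFs} applied to $\hat{g}$ gives $\hat{\FF{2n+3}} = O^f_n(\hat{\III})$; since the operators $O^f_n$ are intrinsic to $(\Sigma, \bar{g}, f)$ --- they are built from $f$, $\bar{\nabla}$, $\bar{g}^{-1}$, and $\bar{R}$ in the proof of Proposition~\ref{diff-op-FFs} --- they are literally the same operators for $g$ and $\hat{g}$, and the hypothesis $\FF{2n+3} = O^f_n(\III)$ together with $\III = \hat{\III}$ yields $\FF{2n+3} = \hat{\FF{2n+3}}$ for all $n$.

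With all fundamental forms and the induced metric agreeing, Theorem~\ref{equal-metrics} forces $\hat{g} = g$ on a (possibly smaller) collar of $p$; as $\hat{g}$ is by construction a warped product with base $U \cap \Sigma$, this gives the desired local isometry. I expect the only real subtlety to be bookkeeping rather than analysis: one must be careful that the single function $f$ supplied by the hypotheses is simultaneously the warping function used to build $\hat{g}$, the function producing $\III$ via $\bar{\nabla}^2 f = f\III$, and the parameter of the operators $O^f_n$ --- so that the three families of hypotheses dovetail exactly with the three forward-direction results (Theorem~\ref{vanishing-evens}, the O'Neill formula, and Proposition~\ref{diff-op-FFs}). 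All the genuine work has already been done: the uniqueness of the metric from its fundamental forms (Corollary~\ref{cor-lies} and Proposition~\ref{unique}) underlying Theorem~\ref{equal-metrics}, and the intrinsic characterization of the odd forms in Proposition~\ref{diff-op-FFs}; the analyticity hypothesis is precisely what licenses the normal-form Taylor argument behind those results.
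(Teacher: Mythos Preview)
Your proposal is correct and follows essentially the same approach as the paper: construct the candidate base-like metric $\hat{g} = f^2\,dt^2 + \bar{g}$, use Theorem~\ref{vanishing-evens}, the O'Neill relation $\III = f^{-1}\bar{\nabla}^2 f$, and Proposition~\ref{diff-op-FFs} to match all fundamental forms, and then invoke Theorem~\ref{equal-metrics}. You spell out the matching of $\III$ and the intrinsic nature of the operators $O^f_n$ more explicitly than the paper's very terse proof, but the strategy is identical.
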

\begin{proof}
Given the function $f$ and following Theorem~\ref{vanishing-evens} and Proposition~\ref{diff-op-FFs}, we may construct a base-like hypersurface embedding with induced metric and fundamental forms that match those given in the theorem. Hence it follows from Theorem~\ref{equal-metrics}, that, in a neighborhood of a point in $\Sigma$, we have that the given embedding restricts to a base-like hypersurface embedding.

\end{proof}

\section{Conformal Fundamental Forms and Products} \label{sec:conf}
In the same way that a Riemannian fundamental form is a natural Riemannian invariant, meaning that the quantity transforms covariantly with respect to coordinate transformations and can be constructed from natural geometric objects arising from the embedding (such as the metric, curvatures, and the unit normal vector), conformal fundamental forms are conformal invariants, meaning they transform covariantly under both coordinate transformations and conformal rescalings and are similarly constructible. Conformal fundamental forms, then, can be considered curvature corrections to the Riemannian fundamental forms such that they respect conformal rescalings. More generally, conformal fundamental forms are defined as below~\cite{BGW1}:
\begin{definition}
Let $2 \leq k \in \mathbb{N}$. A $k$th \textit{conformal fundamental form} is any natural section $\FFo{k}$ of $\odot^2_{\circ} T^* \Sigma$ with transverse order $k-1$ such that, for an embedding $\Sigma \hookrightarrow M$ with $M$ equipped with two metrics $g$ and $\Omega^2 g$, the section obey
$$\FFo{k}^{\Omega^2 g} = \Omega^{3-k} \FFo{k}^g\,.$$
\end{definition}

Observe that the trace-free second fundamental form $\IIo$ satisfies this definition, as does, for example, a third conformal fundamental form
$$\IIIo := W(n,\cdot, \cdot, n)|_{\Sigma}\,.$$
Conformal fundamental forms are examples of a sections of weighted density bundles.

To elaborate on this fact, note that a conformal manifold $(M,\cc)$ can be identified with a ray subbundle $\mathcal{G} \in \odot^2 T^* M$ where the fiber at $p \in M$ is the set of all possible metrics $g \in \cc$ evaluated at $p$. Note that this subbundle is a principal bundle with structure group $\mathbb{R}_+$. A \textit{density bundle of weight} $w \in \mathbb{R}$ denoted by $\ce M[w]$, then, is a line bundle associated to $\mathcal{G}$ via the irreducible representation $\mathbb{R}_+ \ni t \mapsto t^{-w/2} \in \operatorname{End}(\mathbb{R})$. Note that this bundle can also be constructed as a real power of the volume forms
$$\ce M[w] \cong [(\wedge^d TM)^2]^{w/2d}\,,$$
and a section can be viewed as a double equivalence class $\phi = [g; f] = [\Omega^2 g; \Omega^w f] \in \Gamma(\ce M[w])$.
Then, for any vector bundle $\mathcal{V}$, we may define a weighted vector bundle $\mathcal{V}[w]$ as the tensor product $\mathcal{V}[w]:= \mathcal{V} \otimes \ce M[w]$. In that case,
$$\FFo{k} \in \Gamma(\odot^2_{\circ} T^* \Sigma[3-k])\,.$$

There are several constructions of conformal fundamental forms~\cite{BGW1,Blitz1}, and it is not known to what extent conformal fundamental forms can always be constructed. Indeed, it is know that when the conformal manifold $(M^d,\cc)$ has even dimension, conformal fundamental forms up to transverse order $d-2$ may be constructed for any conformal hypersurface embedding~\cite{Blitz1}. On the other hand, when the conformal manifold has odd dimension, conformal fundamental forms are constructible up to transverse order $\tfrac{d-1}{2}$~\cite{BGW1}. 

Several of the results in the section that follows assume that the dimension $d$ of the conformal manifold $(M^d,\cc)$ is even. However, all of these results may be reproduced in the odd dimensional case, except they only characterize or rely upon conformal fundamental forms up to transverse order $\tfrac{d-1}{2}$.

\medskip

Now we may study the relationship between conformal fundamental forms and the existence of a product metric $g \in \cc$ such that for some metric $\bar{g} \in \bar{\cc} := \iota^* \cc$, $g$ is isometric to $ds^2 + \bar{g}$, as we did for the relationship between Riemannian fundamental forms and warped product manifolds. We call such conformal hypersurface embeddings \textit{conformal product manifolds}. As we do not necessarily have access to a ``complete'' family of conformal fundamental forms (as perhaps one might hope for an infinite family while only a finite number are defined), we may also define an \textit{asymptotic conformal product manifold}, which is a conformal hypersurface embedding $\Sigma \hookrightarrow (M,\cc)$ for which there exists  $g \in \cc$ and $\bar{g} \in \bar{\cc}$ such that there exists an isometry $\Phi$
$$\Phi^* g = ds^2 +\bar{g} + \mathcal{O}(s^{d-1})\,.$$

\medskip

Having established the conformal equivalent of (warped) product manifolds, we begin with a conformally-invariant version of Theorem~\ref{vanishing-evens}.
\begin{lemma} \label{even-conf-FFs}
Let $\Sigma \hookrightarrow (M^d,\cc)$ be a conformal product manifold with $d$ even. For each integer $k \leq \tfrac{d-2}{2}$, $\FFo{2k} = 0$.
\end{lemma}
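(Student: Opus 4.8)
The plan is to exploit conformal covariance to transport the question into the distinguished product representative, and then to kill the resulting tensor with a reflection symmetry, mirroring the parity argument of Theorem~\ref{vanishing-evens}. First I would invoke the defining transformation law $\FFo{2k}^{\Omega^2 g} = \Omega^{3-2k}\FFo{2k}^g$: since $\Omega$ is positive, $\FFo{2k}$ vanishes for one representative of $\cc$ precisely when it vanishes for all of them. The conformal-product hypothesis hands us exactly such a representative, namely a metric $g\in\cc$ which in suitable coordinates equals $ds^2 + \bar{g}$ with $\bar{g}$ independent of $s$, so it suffices to establish $\FFo{2k}^g = 0$ for this $g$. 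Note that $(U,g)$ is then a genuine Riemannian product, so by Corollary~\ref{prod-metric} every Riemannian fundamental form $\FF{m}$ vanishes. This alone is not enough, however, since a conformal fundamental form differs from the trace-free part of $\FF{m}$ by curvature corrections; indeed $\IIIo = W(n,\cdot,\cdot,n)$ need not vanish on a product, so a finer argument is required to separate the even from the odd orders.

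The key observation is that $g = ds^2 + \bar{g}$ is invariant under the reflection isometry $\sigma\colon (s,x)\mapsto(-s,x)$, which fixes $\Sigma = \mathcal{Z}(s)$ pointwise, acts as the identity on $T\Sigma$, and sends the unit conormal $n \mapsto -n$. I would then argue that $\FFo{2k}$, being a natural section of transverse order $2k-1$, is homogeneous of odd degree $2k-1$ in the conormal $n$, and therefore reverses sign under $n \mapsto -n$. This parity statement is the crux of the proof: it is the conformal analogue of the weight grading of Remark~\ref{weights}, and it can be read off from the explicit constructions of~\cite{BGW1,Blitz1}, in which each conformal fundamental form is assembled from the Riemannian fundamental forms (whose $n$-degree equals their transverse order) together with Weyl and Schouten curvatures contracted against $n$, every summand carrying an odd number of factors of $n$.

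Granting the parity, the conclusion is immediate. Naturality of $\FFo{2k}$ under the isometry $\sigma$ gives $\sigma^*\bigl(\FFo{2k}^g\bigr) = \FFo{2k}^{\sigma^* g} = \FFo{2k}^g$; but $\sigma$ acts trivially on tensors over $\Sigma$ while simultaneously reversing $n$, so the left-hand side also equals $-\FFo{2k}^g$. Hence $\FFo{2k}^g = -\FFo{2k}^g$, forcing $\FFo{2k}^g = 0$, and conformal covariance propagates this to all of $\cc$. The restriction $k \leq \tfrac{d-2}{2}$ enters only to guarantee that $\FFo{2k}$ is defined: its transverse order $2k-1$ is at most $d-3$, which lies within the range $d-2$ in which conformal fundamental forms are known to exist when $d$ is even.

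I expect the main obstacle to be the parity claim of the second paragraph --- making precise, in a construction-independent way, that any natural conformal fundamental form of transverse order $2k-1$ must be odd in the conormal. The safest route is to track the $n$-degree through the inductive construction of the $\FFo{m}$ and verify that it equals the transverse order modulo $2$; an alternative is to run the reflection argument directly at the ambient level, showing that every $\sigma$-invariant natural candidate of the prescribed weight and transverse order is even in $n$, so that the odd-order ones are forced to vanish. Everything else reduces to the bookkeeping already developed for the Riemannian fundamental forms.
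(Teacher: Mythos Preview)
Your reflection argument is a genuinely different route from the paper's proof. The paper also passes to the product representative, but then argues structurally: on $g = ds^2 + \bar g$ one has $\nabla n = 0$ and every normal component of the curvature ($R_{abcn}$, $R_{nabn}$) vanishes, so any natural hypersurface invariant collapses to a natural \emph{intrinsic} invariant of $(\Sigma,\bar g)$; such invariants are built from $\bar g$, $\bar g^{-1}$, $\bar R$, $\bar\nabla$ and therefore have even homogeneity under $g\mapsto\lambda^2 g$, whereas $\FFo{2k}\in\Gamma(\odot^2_\circ T^*\Sigma[3-2k])$ has odd weight, forcing it to vanish. So where you use the $\mathbb{Z}/2$-symmetry $s\mapsto -s$, the paper uses the $\mathbb{R}_+$-symmetry of constant rescaling. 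Your approach is cleaner once the parity is in hand, since it avoids checking that all normal curvature components die; the paper's approach is more self-contained because it never needs to know the $n$-parity of the specific $\FFo{2k}$ chosen.

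Two remarks on your gap. First, the statement that $\FFo{2k}$ is ``homogeneous of odd degree $2k-1$ in the conormal $n$'' is too strong: the standard formula for $\FFo{4}$ already mixes terms of $n$-degree $1$ (e.g.\ $C_{n(ab)}$) and $n$-degree $3$ (e.g.\ $H\, W_{nabn}$). What you need, and what actually holds, is only that every monomial has \emph{odd} $n$-degree. Second, the cleanest construction-independent justification of that parity is exactly the paper's weight observation in disguise: in any natural formula the ingredients $g$, $g^{-1}$, $R_{abcd}$, $\nabla$ all carry even homogeneity under $g\mapsto\lambda^2 g$, while the unit normal $n$ (rescaling as $n\mapsto\lambda^{-1}n$) is the unique source of odd homogeneity; hence the total $n$-degree of each monomial is congruent to the conformal weight $3-2k$ modulo $2$, i.e.\ odd. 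With this, your reflection argument goes through for \emph{any} choice of $\FFo{2k}$, not just for the particular constructions in \cite{BGW1,Blitz1}. In effect, the paper's weight count is precisely the missing lemma that closes your parity step.
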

\begin{proof}
As $\Sigma \hookrightarrow (M,\cc)$ is a conformal product manifold, by definition, there exists a representative $g \in \cc$ and $\bar{g} \in \bar{\cc}$ such that 
there exists an isometry $\Phi$ such that  $\Phi^* g = ds^2 + \bar{g}$. We work in this metric representative.

Now note that all conformal fundamental forms are natural conformal hypersurface invariants, meaning they are expressible in terms of the metric $g$, the unit normal vector $n$, the covariant derivative $\nabla$, and the curvature tensor $R$~\cite{Blitz2}. Note that in any such formula, the only derivatives of $n$ that may appear are along the hypersurface, i.e. $(\nabla^\top)^m n$. Next, observe by direct computation of $\mathcal{L}_{n} g|_{\Sigma}$ that $\II = 0$. Thus, derivatives of $n$ may not contribute in a non-vanishing way to any conformal fundamental form. It follows that all natural conformal hypersurface invariants are expressible in terms of an undifferentiated $n$, the metric $g$, the curvature $R$, and its derivatives.

However, as the embedding is a product manifold, $\nabla = \partial_s + \bar{\nabla}$. Furthermore, $R^\top = \bar{R}$, $R_{abc n}^\top = 0$, $R_{nabn} = 0$, and $Ric = \bar{Ric}$, and there is no $s$ dependence in any curvature quantities. Thus, every natural conformal hypersurface invariant is expressible in terms of $\bar{g}$, $\bar{\nabla}$, and $\bar{R}$, i.e. these natural conformal hypersurface invariants are in fact expressible as natural conformal invariants of $(\Sigma, \bar{g})$. Note that $n$ cannot appear as natural conformal hypersurface invariants are tensors along $\Sigma$.

Now, observe that all pointwise-conformal invariants of $(\Sigma,\bar{g})$ (that do not involve the volume form) have even conformal weight, as $\bar{g}$ has homogeneity $2$, $\bar{\nabla}$ has homogeneity $0$, and $\bar{R}$ has homogeneity $0$ (with the natural index placement), where a Riemannian invariant $I$ with homogeneity $w$ is one that scales to $\lambda^w I$ when $g \mapsto \lambda^2 g$ for constant $\lambda$. But, $\FFo{2k} \in \Gamma(\odot^2_\circ T^* \Sigma[3-2k])$ has odd weight. As no composition of the intrinsic geometric quantities can produce an odd weight, there is only one section of this space that is built from geometric quantities: the zero section. This completes the proof.
\end{proof}

Like in the base-like hypersurface case for Riemannian warped product metrics, characterizing the odd conformal fundamental forms in the case of a conformal product manifold takes more finesse. As such, we first introduce some technical results.

The first of these technical results relates the third conformal fundamental form to the trace-free component of the Schouten tensor induced on the hypersurface.
\begin{proposition} \label{IIIo-Pb}
Let $\Sigma^{d-1} \hookrightarrow (\mathbb{R} \times \Sigma^{d-1},ds^2 + \bar{g})$. Then, $\IIIo = \tfrac{d-3}{d-2} \mathring{\bar{P}}$.
\end{proposition}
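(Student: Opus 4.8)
The plan is to compute $\IIIo$ directly in the product representative $g = ds^2 + \bar{g}$ by expanding the defining formula $\IIIo_{ab} = \iota^*(n^c n^d W_{cabd})$ and matching the result against the intrinsic Schouten tensor of $\Sigma$. The whole argument is a tensor contraction followed by bookkeeping of the two Schouten normalizations in dimensions $d$ and $d-1$.

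First I would substitute the definition of the Weyl tensor, written as $W_{cabd} = R_{cabd} - g_{cb}P_{ad} + g_{cd}P_{ab} + g_{ab}P_{cd} - g_{ad}P_{cb}$, and contract with $n^c n^d$ while restricting $a,b$ to be tangential. Since $n$ is the unit conormal, $g(n,n)=1$ while $n_a = \iota^* g_{an}$ vanishes for tangential $a$; hence the second and fifth terms drop out, the third collapses via $g_{cd}n^c n^d = 1$, and the fourth produces a pure-trace piece. This leaves
$$\IIIo_{ab} = \iota^*(n^c n^d R_{cabd}) + \iota^* P_{ab} + \bar{g}_{ab}\, P_{nn}\,,$$
where $P_{nn} = n^c n^d P_{cd}$.

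Next I would evaluate each surviving piece in the product geometry. As already recorded in the proof of Lemma~\ref{even-conf-FFs}, the product structure (flat $\mathbb{R}$ factor) forces $n^c n^d R_{cabd} = R_{nabn} = 0$, gives $Ric = \bar{Ric}$ on tangential indices with $Ric_{nn}=0$, and $Sc = \bar{Sc}$. Feeding these into the $d$-dimensional Schouten formula $P_{ab} = \tfrac{1}{d-2}\big(Ric_{ab} - \tfrac{1}{2(d-1)}Sc\, g_{ab}\big)$ yields explicit expressions for $\iota^* P_{ab}$ and for $P_{nn} = -\tfrac{\bar{Sc}}{2(d-1)(d-2)}$, so that after combining the two $\bar{g}_{ab}$ contributions one obtains
$$\IIIo_{ab} = \tfrac{1}{d-2}\,\bar{Ric}_{ab} - \tfrac{1}{(d-1)(d-2)}\,\bar{Sc}\,\bar{g}_{ab}\,.$$

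Finally I would compute $\mathring{\bar{P}}$, the trace-free Schouten tensor of the $(d-1)$-dimensional hypersurface $(\Sigma,\bar{g})$, from $\bar{P}_{ab} = \tfrac{1}{d-3}\big(\bar{Ric}_{ab} - \tfrac{1}{2(d-2)}\bar{Sc}\,\bar{g}_{ab}\big)$ by removing its trace (using $\bar{J} = \bar{Sc}/(2(d-2))$ in dimension $d-1$), and verify that $\tfrac{d-3}{d-2}\mathring{\bar{P}}_{ab}$ reproduces exactly the displayed expression for $\IIIo_{ab}$. The main obstacle is purely this bookkeeping: one must track the dimension shift from $d$ to $d-1$ in the two Schouten normalizations and confirm that the pure-trace terms coming from $\iota^* P_{ab}$, from $\bar{g}_{ab}P_{nn}$, and from the trace-removal in $\mathring{\bar{P}}$ assemble with matching coefficients. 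As a consistency check, the left side is manifestly trace-free, since $\bar{g}^{ab}\IIIo_{ab} = -\,n^a n^b n^c n^d W_{cabd} = 0$ by the antisymmetry of $W$ in its first pair of indices, which matches the trace-freeness of $\mathring{\bar{P}}$.
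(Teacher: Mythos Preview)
Your proof is correct. Both you and the paper begin identically: expand $W_{nabn}$ via the Weyl--Schouten decomposition, observe that $R_{nabn}=0$ on the product, and hence reduce $\IIIo$ to the trace-free tangential part of the ambient Schouten tensor. The only real difference lies in the second step. The paper invokes a cited hypersurface identity (a Fialkow--Gauss type relation)
\[
\IIo^2_{(ab)_{\circ}} - W_{nabn} = (d-3)\big(\otop P_{ab} - \mathring{\bar{P}}_{ab} + H\,\IIo_{ab}\big)
\]
and then sets $\IIo=0$ to solve for $\IIIo$ algebraically. You instead compute $\iota^* P_{ab}$ and $P_{nn}$ explicitly from $Ric=\bar{Ric}$, $Sc=\bar{Sc}$, and then match against the $(d-1)$-dimensional $\mathring{\bar{P}}$ by hand. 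Your route is more elementary and self-contained; the paper's is shorter if one has the Fialkow identity on hand, and has the minor advantage that it makes transparent how the result would be modified for non-product embeddings (the $\IIo$ and $H$ corrections are visible). Either way the content is the same bookkeeping of the two Schouten normalizations, exactly as you identified.
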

\begin{proof}
By definition, $\IIIo \= W_{nabn}$. First, observe from the relationship between the Weyl tensor and the Riemann curvature tensor, we have that
$$\otop R_{nabn} = W_{nabn} - \otop P_{ab}\,,$$
where $\otop$ is defined as the projection of a tensor the hypersurface (co)tangent bundle (or tensor products thereof) and subsequent projection to the trace-free part.
On a product manifold, however, it follows from Corollary~\ref{prod-metric} that $R_{nabn} = 0$, and so $W_{nabn} = \otop P_{ab}$. But by applying a standard hypersurface identity~\cite{Will1}
$$\IIo^2_{(ab)_{\circ}} - W_{nabn} = (d-3) \left(\otop P_{ab} - \mathring{\bar{P}}_{ab} + H \IIo_{ab} \right)\,,$$
and recalling from Proposition~\ref{even-conf-FFs} that $\IIo = 0$ for a product manifold, it follows that by direct computation that $\IIIo = \tfrac{d-3}{d-2} \mathring{\bar{P}}$. 
\end{proof}
The corollary then trivially follows:
\begin{corollary} \label{Einstein-IIIo}
Let $\Sigma \hookrightarrow (\mathbb{R} \times \Sigma,ds^2 + \bar{g})$. Then, $\IIIo = 0$ if and only if $(\Sigma,\bar{g})$ is Einstein.
\end{corollary}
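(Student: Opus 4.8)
The plan is to read the equivalence directly off Proposition~\ref{IIIo-Pb}, which already identifies $\IIIo$ with a constant multiple of the trace-free part $\mathring{\bar{P}}$ of the hypersurface Schouten tensor, and then to translate the vanishing of $\mathring{\bar{P}}$ into the Einstein condition. First I would note that the proportionality constant $\tfrac{d-3}{d-2}$ appearing there is nonzero (which requires $d\neq 3$; but the hypersurface Schouten tensor $\bar{P}$ is only defined for $\dim\Sigma = d-1 \geq 3$, i.e.\ $d \geq 4$, so this is exactly the range in which the statement is substantive). Hence $\IIIo = 0$ if and only if $\mathring{\bar{P}} = 0$.

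Next I would pass from the Schouten tensor to the Ricci tensor. Recalling the definition of the Schouten tensor on the $(d-1)$-dimensional manifold $(\Sigma,\bar{g})$,
$$\bar{P}_{ab} = \tfrac{1}{d-3}\left(\bar{Ric}_{ab} - \tfrac{1}{2(d-2)}\,\bar{Sc}\,\bar{g}_{ab}\right),$$
and taking its trace-free part, the pure-trace term drops out and we obtain $\mathring{\bar{P}}_{ab} = \tfrac{1}{d-3}\mathring{\bar{Ric}}_{ab}$. Therefore $\mathring{\bar{P}} = 0$ if and only if the trace-free Ricci tensor $\mathring{\bar{Ric}}$ of $(\Sigma,\bar{g})$ vanishes. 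Chaining the two equivalences gives $\IIIo = 0 \iff \mathring{\bar{Ric}} = 0$ (indeed one sees $\IIIo = \tfrac{1}{d-2}\mathring{\bar{Ric}}$, with the factor of $d-3$ cancelling).

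Finally I would identify $\mathring{\bar{Ric}} = 0$ with $(\Sigma,\bar{g})$ being Einstein. One direction is immediate: the Einstein condition $\bar{Ric}_{ab} = \tfrac{\bar{Sc}}{d-1}\bar{g}_{ab}$ has vanishing trace-free part by construction. For the converse, $\mathring{\bar{Ric}} = 0$ says precisely that $\bar{Ric}$ is pointwise proportional to $\bar{g}$, and since $\dim\Sigma = d-1 \geq 3$, the contracted second Bianchi identity (Schur's lemma) forces the proportionality factor $\tfrac{\bar{Sc}}{d-1}$ to be constant, upgrading this to the Einstein condition. The only step meriting care is this last one—verifying that ``trace-free Ricci vanishes'' and ``Einstein with constant factor'' genuinely coincide—but the dimension hypothesis $d \geq 4$ places us squarely in the range where Schur's lemma applies, so no real obstacle arises.
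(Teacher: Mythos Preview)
Your proof is correct and takes exactly the approach the paper intends: the paper simply records that the corollary ``trivially follows'' from Proposition~\ref{IIIo-Pb}, and you have spelled out the details, including the (necessary) observation that $d\geq 4$ so that the coefficient $\tfrac{d-3}{d-2}$ is nonzero and Schur's lemma applies on $\Sigma$.
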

%

Next, we are concerned with a solution to the singular Yamabe problem~\cite{Loewner,Aviles,ACF}, which is the problem of finding a density $\sigma \in \Gamma(\ce M[1])$ such that $\Sigma = \mathcal{Z}(\sigma)$ and $|I_{\sigma}|^2 := |d\sigma|^2 - \tfrac{2\sigma}{d}(\Delta \sigma + J \sigma) = 1$. It is known that there always exists such a density, and furthermore there always exists~\cite{Will1} a smooth solution to $|I_{\sigma}|^2 = 1 + \sigma^d B_d$, where $B_d \in \Gamma(\ce \Sigma[-d])$ is the celebrated Willmore invariant.

\begin{theorem} \label{Yamabe-general}
Let $(\Sigma,\bar{g})$ has dimension greater than 2 and let 
$\Sigma \hookrightarrow (\mathbb{R} \times \Sigma,g:= ds^2 + \bar{g}_{ij} dx^i dx^j)$. Then, 
the canonical defining density that solves the singular Yamabe problem $\sigma$ is given an by
odd power series in $s$, i.e.
$$
\sigma = s + \sum_{\ell=1}^r \varphi_{2\ell+1} s^{2\ell+1}.
$$
More precisely, if $d$ is odd then $|I_\sigma|^2=1$ can be solved to all orders,
i.e.\ $r=\infty$, hence the Willmore invariant is zero.
If $d$ is even then the power series is deterermined only to the order $d-1$, i.e.\ for $r=\tfrac{d}{2}-1$.
Then $|I_\sigma|^2=1 + \psi_{d}s^d + O(s^{d+1})$ where $\psi_d$ is the Willmore invariant.
Moreover, all coefficients $\varphi_{2\ell+1}$  and also $\psi_{d}$
are polynomials in $J$ and its covariant derivatives.
\end{theorem}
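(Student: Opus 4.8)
The plan is to work in the distinguished product scale $g = ds^2 + \bar g$ and to solve the singular Yamabe equation as a formal recursion in the normal coordinate $s$. In this scale a weight-one density is just a function $\sigma(s,x)$, the Laplacian splits as $\Delta = \partial_s^2 + \bLap$ with $\bLap := \bar\nabla^a \bar\nabla_a$, and since the metric has no $s$-dependence the quantity $J = J_g$ is a function pulled back from $\Sigma$ (a fixed constant multiple of $\bar{J}$). The defining conditions $\Sigma = \mathcal{Z}(\sigma)$ and $|I_\sigma|^2|_\Sigma = 1$ force $\partial_s\sigma|_{s=0} = 1$, so after fixing the sign I normalize $\sigma = s + \mathcal{O}(s^2)$ and write $\sigma = \sum_{m\ge 1}\varphi_m(x)\, s^m$ with $\varphi_1 = 1$. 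The whole statement then reduces to analyzing the coefficients of $|I_\sigma|^2 - 1$ order by order in $s$.

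First I would isolate the term of $|I_\sigma|^2$ that is linear in the highest unknown coefficient. Collecting the coefficient of $s^{n-1}$, the pieces linear in $\varphi_n$ come only from the cross term $2\varphi_1\cdot n\varphi_n$ in $(\partial_s\sigma)^2$ and from $\varphi_1\cdot n(n-1)\varphi_n$ in $-\tfrac{2\sigma}{d}\partial_s^2\sigma$, so the equation at order $s^{n-1}$ reads
\[
\frac{2n(d-n+1)}{d}\,\varphi_n \;+\; \big(\text{terms built from }\varphi_1,\dots,\varphi_{n-1}\big) \;=\; 0 .
\]
The prefactor $\tfrac{2n(d-n+1)}{d}$ is non-zero precisely unless $n = d+1$; hence every coefficient except $\varphi_{d+1}$ is uniquely determined by the lower ones.

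The next step is the parity analysis, which is the crux of the odd-power claim. Here I would use the reflection $s\mapsto -s$: it is an isometry of $(\mathbb{R}\times\Sigma, g)$ fixing $\Sigma$, and a direct check shows that $\sigma\mapsto -\sigma(-s,x)$ maps solutions of $|I_\sigma|^2 = 1$ to solutions. Since the canonical defining density is uniquely determined order-by-order below the obstruction order, it must be invariant, i.e.\ odd in $s$, so all even-indexed $\varphi_m$ vanish. (Equivalently, one checks directly from the product structure that at even $n$ every source term built from odd-indexed $\varphi_1,\dots,\varphi_{n-1}$ contributes only even powers of $s$, so the coefficient of $s^{n-1}$ has no source and the recursion forces $\varphi_n=0$.) With oddness in hand the two cases separate cleanly: if $d$ is odd then $n=d+1$ is even, the undetermined coefficient is an even one that we set to zero, every odd coefficient is determined, and $|I_\sigma|^2 = 1$ holds to all orders, whence the Willmore invariant vanishes; if $d$ is even then $n=d+1$ is odd, the recursion determines $\varphi_3,\dots,\varphi_{d-1}$ (that is $r = \tfrac{d}{2}-1$), the leading prefactor in the $s^{d}$ equation degenerates, and the residual built from the already-determined coefficients is the obstruction, giving $|I_\sigma|^2 = 1 + \psi_d s^d + \mathcal{O}(s^{d+1})$ with $\psi_d$ the Willmore invariant.

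Finally, polynomiality follows by induction on $n$: $\varphi_1 = 1$, and the recursion expresses $\varphi_n$ (and likewise $\psi_d$) as a universal rational multiple of a sum of products of $\varphi_1,\dots,\varphi_{n-1}$ acted on only by $\bLap$, by gradient-contractions $\bar g^{ab}\bar\nabla_a(\cdot)\bar\nabla_b(\cdot)$, and by multiplication with $J$; since no curvature other than $J$ enters the operator in this scale, each $\varphi_{2\ell+1}$ and $\psi_d$ is a polynomial in $J$ and its covariant derivatives. I expect the main obstacle to be the parity bookkeeping in the third step — verifying that, order by order, the only surviving source at even orders is the diagonal one, so that oddness is genuinely \emph{forced} rather than merely permitted, while simultaneously confirming that the leading prefactor degenerates \emph{exactly} at $n=d+1$. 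The computation of $\tfrac{2n(d-n+1)}{d}$ and the cancellation of the would-be $s$-odd sources are where the real care is needed.
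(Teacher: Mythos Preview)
Your proposal is correct and follows essentially the same approach as the paper: an order-by-order recursion for $\sigma$ in the product scale, identification of the leading coefficient $\tfrac{2n(d-n+1)}{d}$ and its degeneration at $n=d+1$, together with a parity argument to kill the even coefficients. The only notable variation is that you justify oddness via the reflection isometry $s\mapsto -s$ rather than by the paper's direct inspection of terms in $|I_{\tilde\sigma}|^2$; this is a clean conceptual substitute for the same step.
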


The curvature quantity $J$ and the covariant derivative correspond to the metric $g$. But since
there is a very simple relation between covariant derivatives of $g$ and $\bar{g}$, the previous theorem
means coefficients $\varphi_{2\ell+1}$ and $\psi_d$ can be expressed 
as polynomials in $\bar{J}$ of $\bar{g}$ and its covariant derivatives with respect to  $\overline{\nabla}$.

\begin{proof}
We shall prove by induction (with respect to $\ell_0$) that we can choose
$\varphi_{2\ell+1}$, $0 \leq \ell \leq \ell_0 \leq r$ 
(as polynomials in $J$ and its covariant derivatives) in such a way that 
$|I_\sigma|^2 = 1 + \psi_{2(\ell_0+1)} s^{2(\ell_0+1)} + O(s^{2(\ell_0+1)+1})$. 
The case $\ell_0=0$---where the previous display is understood as
$\sigma=s$---can be easily  verified by a direct computation with $\psi_2 = -\tfrac{2J}{d}$.
Now assume we have 
$$
\sigma = s + \sum_{\ell=1}^{\ell_0} \varphi_{2\ell+1} s^{2\ell+1}
\quad \text{such that} \quad
|I_\sigma|^2 = 1 + \psi_{2(\ell_0+1)} s^{2(\ell_0+1)} + O(s^{2(\ell_0+1)+1})
$$
and put $\tilde{\sigma} = \sigma + \varphi_{2\ell_0+3} s^{2\ell_0+3}$ with $\varphi_{2\ell_0+3}$
to be determined. Using the notation $n_a := \nabla_a s$ (which satisfies $\nabla_a n_b =0$ and $|n|^2=1$), 
we compute
\begin{align*}
& \nabla_a \tilde{\sigma} = \nabla_a \sigma + (2\ell_0+3) \varphi_{2\ell_0+3} n_a s^{2\ell_0+2}
+ O(s^{2\ell_0+3}), \\
& \Delta \tilde{\sigma} = \Delta \sigma + (2\ell_0+2) (2\ell_0+3) \varphi_{2\ell_0+3} s^{2\ell_0+1}
+ O(s^{2\ell_0+2}).
\end{align*}
Since $\nabla_a \sigma = n_a + O(s^2)$, we further compute
\begin{align*}
& (\nabla^a \tilde{\sigma})(\nabla_a \tilde{\sigma}) = (\nabla_a \sigma)( \nabla_a \sigma) + 2(2\ell_0+3) \varphi_{2\ell_0+3} s^{2\ell_0+2} + O(s^{2\ell_0+3}), \\
& \tilde{\sigma} \Delta \tilde{\sigma} = \sigma \Delta \sigma + (2\ell_0+2) (2\ell_0+3) \varphi_{2\ell_0+3} s^{2\ell_0+2} + O(s^{2\ell_0+3}), \\
& \tilde{\sigma}^2 = \sigma^2 + O(s^{2\ell_0+4}).
\end{align*}
Summarizing, we obtain
\begin{equation} \label{Isigma}
\begin{split}
|I_{\tilde{\sigma}}|^2 &= |I_{\sigma}|^2 + \bigl[ 2(2\ell_0+3) - \tfrac{2}{d} (2\ell_0+2) (2\ell_0+3) \bigr]
 \varphi_{2\ell_0+3} s^{2\ell_0+2} + O(s^{2\ell_0+3}) = \\
& = 1 + \bigl[ \psi_{2(\ell_0+1)} + \tfrac{2(2\ell_0+3)(d-2\ell_0-2)}{d}  \varphi_{2\ell_0+3} \bigr]
s^{2\ell_0+2} + O(s^{2\ell_0+3})
\end{split}
\end{equation}
hence
$$
\varphi_{2\ell_0+3} = - \tfrac{d}{2(2\ell_0+3)(d-2\ell_0-2)} \psi_{2(\ell_0+1)}.
$$
Considering the denominator, this is always possible for $d$ odd. If $d$ is even, this is possible
for small $\ell$ up to $\ell = \tfrac{d}{2}-2$ but when $\ell = \tfrac{d}{2}-1$, the square bracket
in \eqref{Isigma} is independent on the choice of $\varphi_{2\ell_0+3}$; this bracket is the Willmore invariant
$\psi_{2d}$~\cite{Will1}.

Finally note that if the square bracket in \eqref{Isigma} is zero, we have 
$|I_{\tilde{\sigma}}|^2 = 1 + O(s^{2\ell_0+3})$. However, a quick analysis of possible terms
in $|I_{\tilde{\sigma}}|^2$ reveals that this is necessarily an even power series and can only depend on $J$ and its derivatives. Hence
$|I_{\tilde{\sigma}}|^2 = 1 + O(s^{2(\ell_0+2)})$ which completes the proof.
\end{proof}

First a few coefficients of the power series for $\sigma$ can be computed by a direct combination. We obtain
\begin{align*}
\sigma =& s + \tfrac{1}{3(d-2)} Js^3 
+ \tfrac{1}{15(d-2)} \bigl[ \tfrac{1}{2(d-2)}J^2 + \tfrac{1}{d-4} \Delta J \bigr] s^5 + \\
& + \tfrac{1}{105(d-2)^2} \bigl[ \tfrac{1}{6(d-2)}J^3 - \tfrac{2(2d+3)}{3(d-4)(d-6)} J \Delta J 
- \tfrac{5d}{6(d-2)(d-6)} (\nabla^a J) (\nabla_a J) \bigr] s^7 + O(s^9).\\
\end{align*}
Similalry, we can evaluate the Willmore invariant $\psi_d$. We have $\psi_d=0$ for $d$ odd and further
$\psi_4 = -\tfrac{1}{12} \Delta J$ and 
$\psi_6 = \tfrac{1}{180} [J \Delta J + (\nabla^a J)(\nabla_a J) - \tfrac12 \Delta^2J]$.

In the special case of constant scalar curvature product manifolds, such $\sigma$ is constructible explicitly 
to all orders:
\begin{lemma} \label{Yamabe-soln}
Let $(\Sigma,\bar{g})$ be a constant scalar curvature Riemannian manifold with dimension greater than 2 and let $\Sigma \hookrightarrow (\mathbb{R} \times \Sigma,g:= ds^2 + \bar{g}_{ij} dx^i dx^j)$. Then, the canonical defining density that solves the singular Yamabe problem $\sigma$ is given by
\begin{align} \label{SY-const-sc}
\sigma := \left[g;  \begin{cases}
\sqrt{\tfrac{(d-1)(2-d)}{\bar{Sc}}} \sin\left(\sqrt{\tfrac{\bar{Sc}}{(d-1)(2-d)}} s\right) & \bar{Sc} < 0 \\
 s  & \bar{Sc}= 0 \\
 \sqrt{\tfrac{(d-1)(d-2)}{\bar{Sc}}} \sinh \left(\sqrt{\tfrac{\bar{Sc}}{(d-1)(d-2)}} s \right)  & \bar{Sc} > 0\,.
\end{cases} \right]
\end{align}
\end{lemma}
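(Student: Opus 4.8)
The plan is to work entirely in the distinguished product scale $g = ds^2 + \bar{g}$, in which the defining density is represented by a function of the transverse coordinate alone, $\sigma = [g;u(s)]$, and to show that the singular Yamabe equation $|I_\sigma|^2 = 1$ collapses to a single ODE. First I would record the two structural simplifications that the product-plus-constant-scalar-curvature hypotheses provide. Adjoining a flat line changes neither the Ricci nor the scalar curvature, so $Sc = \bar{Sc}$ and hence $J = \tfrac{Sc}{2(d-1)} = \tfrac{\bar{Sc}}{2(d-1)}$ is a \emph{constant} (this is precisely where constancy of $\bar{Sc}$ is used). Moreover, since $\partial_s$ is geodesic and $\sqrt{|g|}$ is independent of $s$, the Laplacian splits as $\Delta = \partial_s^2 + \bar{\Delta}$, and $\bar{\Delta}$ annihilates functions of $s$; thus for any $u=u(s)$ one has $|d\sigma|^2 = (u')^2$ and $\Delta\sigma = u''$.

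Given these reductions, the equation $|I_\sigma|^2 = |d\sigma|^2 - \tfrac{2\sigma}{d}(\Delta\sigma + J\sigma) = 1$ becomes the scalar ODE
\[
(u')^2 - \tfrac{2u}{d}\bigl(u'' + J u\bigr) = 1 .
\]
Next I would substitute the proposed closed form. Writing $\kappa^2 := \tfrac{\varepsilon\,\bar{Sc}}{(d-1)(d-2)}>0$ with $\varepsilon = \operatorname{sgn}\bar{Sc}$, the three cases are unified as $u = \tfrac{1}{\kappa}\sinh(\kappa s)$, $u = s$, and $u = \tfrac{1}{\kappa}\sin(\kappa s)$, all of which satisfy $u(0)=0$, $u'(0)=1$, and $u'' = \varepsilon\kappa^2 u$, together with the Pythagorean identity $(u')^2 = 1 + \varepsilon\kappa^2 u^2$. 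Feeding these into the ODE, the constant term gives the desired $1$ and the $u^2$ terms collect into the single coefficient $\varepsilon\kappa^2\tfrac{d-2}{d} - \tfrac{2J}{d}$, which vanishes identically because $J = \tfrac{\varepsilon(d-2)\kappa^2}{2} = \tfrac{\bar{Sc}}{2(d-1)}$ (using $\varepsilon^2 = 1$). Hence $|I_\sigma|^2 = 1$ holds exactly, and $u(0)=0$, $u'(0)=1$ guarantee $\Sigma = \mathcal{Z}(\sigma)$ with the canonical unit normalization.

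It then remains to identify this exact solution with the \emph{canonical} defining density of Theorem~\ref{Yamabe-general}. The Taylor expansion of each closed form is an odd power series in $s$ with leading term $s$, so by the uniqueness of the recursively-determined coefficients $\varphi_{2\ell+1}$ (which, for constant $J$, are fully and unambiguously fixed) it must agree with the canonical series to the relevant order. In the even-$d$ case, the fact that the explicit $u$ already solves $|I_\sigma|^2 = 1$ to all orders shows a posteriori that the Willmore obstruction $\psi_d$ vanishes for constant $J$, consistent with each obstruction term carrying at least one $\bar{\nabla}$-derivative of $J$; the closed form is therefore the exact resummation of the canonical series. The genuine obstacle here is not the ODE verification, which is a one-line identity, but the bookkeeping that converts the conformally invariant density equation into that scalar ODE (the scale choice and the reductions of $\Delta\sigma$ and $|d\sigma|^2$) together with the uniqueness/obstruction-vanishing argument that upgrades ``a solution'' to ``the canonical solution.''
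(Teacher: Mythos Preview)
Your proposal is correct and follows essentially the same approach as the paper: work in the product scale, observe that $J=\tfrac{\bar{Sc}}{2(d-1)}$ is constant and $\Delta$ reduces to $\partial_s^2$ on functions of $s$, and then verify directly that the stated ansatz satisfies $|I_\sigma|^2=1$. Your unified treatment via $\varepsilon=\operatorname{sgn}\bar{Sc}$ and the Pythagorean identity is a tidier packaging of the paper's case-by-case verification, and your closing remarks on canonicity (matching the odd power series of Theorem~\ref{Yamabe-general} and deducing $\psi_d=0$) add a point the paper leaves implicit.
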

\begin{proof}
By the same arguments found in the proof of Proposition~\ref{Einstein-IIIo}, we have that $(\mathbb{R} \times \Sigma,ds^2 + \bar{g})$ has $Sc = \bar{Sc}$. Now, we compute pointwise at $p \in \mathbb{R} \times \Sigma$.

Suppose first that $\bar{Sc}(x^i(p)) = 0$. Then, it trivially follows that $|ds|^2 - \tfrac{2}{d}(\Delta s + Js) = 1$, and so in the product metric representative, $\sigma = [g; s]$.

Next, suppose that $Sc < 0$. Then, suppose that $\sigma = [g; A \sin (rs)]$. The singular Yamabe equation becomes
$$A^2 r^2 \cos (rs)^2 + \frac{2A^2(r^2 - J)}{d} \sin(rs)^2 = 1\,.$$
Thus, we require that $A^2 r^2 = 1$ and that $2A^2 (r^2 - J)/d = 1$. Solving this system and substituting in $J = \tfrac{1}{2(d-1)} \bar{Sc}$, the result follows.

Finally, suppose that $Sc > 0$. Then, suppose that $\sigma = [g; A \sinh (rs)]$. The singular Yamabe equation becomes
$$A^2 r^2 \cosh (rs)^2 - \frac{2A^2(r^2 + J)}{d} \sinh(rs)^2 = 1\,.$$
Again, we require that $A^2 r^2 = 1$ and that $2A^2 (r^2 + J)/d = 1$. Solving again (for positive $A$ and $r$), the result for $\sigma$ follows.
\end{proof}

\begin{remark} \label{nonconst-Sc}
Theorem~\ref{Yamabe-general} and Lemma~\ref{Yamabe-soln} trivially imply that the canonical defining density for $\Sigma \hookrightarrow (\mathbb{R} \times \Sigma,g)$ with $(\Sigma, \bar{g})$ not having a constant scalar curvature differs only from the singular Yamabe density in Display~(\ref{SY-const-sc}) by terms involving derivatives of $\bar{Sc}$.
\end{remark}

%
From these technical results, we have the following technical theorem:
\begin{theorem} \label{E-PE}
Suppose that $(\Sigma, \bar{g})$ is Einstein and that $\Sigma \hookrightarrow (\mathbb{R} \times \Sigma,ds^2 + \bar{g})$. Then, $(\mathbb{R} \times \Sigma , g^+ := \frac{ds^2 + \bar{g}}{\sigma^2})$ is Poincar\'e--Einstein, where $\sigma$ is the singular Yamabe scale in the metric representative $ds^2 + \bar{g}$.
\end{theorem}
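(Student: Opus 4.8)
The plan is to reduce the statement to a warped-product curvature computation, using that an Einstein base forces the singular Yamabe scale $\sigma$ to obey a first-order ODE which promotes constant scalar curvature to the full Einstein condition.

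First I would record the algebraic data. Since $(\Sigma,\bar g)$ is Einstein of dimension $d-1\ge 3$, its scalar curvature is constant and $\overline{\mathrm{Ric}}_{ij}=(d-2)k\,\bar g_{ij}$ with $k:=\tfrac{\bar{Sc}}{(d-1)(d-2)}$. As in the proof of Lemma~\ref{Yamabe-soln} the product $g=ds^2+\bar g$ inherits $Sc=\bar{Sc}$ (so $J=\tfrac{\bar{Sc}}{2(d-1)}=\tfrac{(d-2)k}{2}$ and $k=\tfrac{2J}{d-2}$), and its canonical singular Yamabe density is the explicit $s$-dependent function of Display~\eqref{SY-const-sc}. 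Inspecting the three cases of \eqref{SY-const-sc}, $\sigma=\sigma(s)$ satisfies $\sigma_s^2=1+k\sigma^2$ and $\sigma_{ss}=k\sigma$; equivalently these drop out of $|I_\sigma|^2=\sigma_s^2-\tfrac{2\sigma}{d}(\sigma_{ss}+J\sigma)=1$ once $k=\tfrac{2J}{d-2}$ is used.

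Next I would pass to a geodesic coordinate for $g^+$. Putting $\rho:=\int_0^s\sigma(\tau)^{-1}\,d\tau$ and $\phi:=1/\sigma$ gives $g^+=d\rho^2+\phi(\rho)^2\,\bar g$, a warped product over $(\Sigma,\bar g)$, and the ODE above becomes $(\phi')^2=\phi^2+k$, whence $\phi''=\phi$. Then the standard warped-product Ricci identities over a base of dimension $d-1$, namely $\mathrm{Ric}^+_{\rho\rho}=-(d-1)\tfrac{\phi''}{\phi}$, $\mathrm{Ric}^+_{\rho i}=0$ and $\mathrm{Ric}^+_{ij}=\overline{\mathrm{Ric}}_{ij}-\big(\phi\phi''+(d-2)(\phi')^2\big)\bar g_{ij}$, collapse under $\phi''=\phi$, $(\phi')^2=\phi^2+k$ and $\overline{\mathrm{Ric}}_{ij}=(d-2)k\,\bar g_{ij}$ to $\mathrm{Ric}^+=-(d-1)g^+$. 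Thus $g^+$ is Einstein with the hyperbolic normalization.

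Finally I would verify the conformal-infinity data: $\sigma=s+O(s^3)$ vanishes to first order along $\Sigma=\mathcal{Z}(\sigma)$, the rescaling $\sigma^2g^+=g$ extends smoothly across $\Sigma$, and $|d\sigma|_g^2\big|_\Sigma=\sigma_s^2\big|_{s=0}=1$, so $g^+$ is conformally compact and asymptotically hyperbolic with conformal infinity $(\Sigma,[\bar g])$; together with $\mathrm{Ric}^+=-(d-1)g^+$ this is exactly the Poincar\'e--Einstein condition. The main obstacle is the second paragraph: a singular Yamabe scale in general yields only constant scalar curvature, and it is the product structure over an Einstein base that forces the sharper first-order relation $\sigma_s^2=1+k\sigma^2$ (equivalently, that $\sigma$ is an almost-Einstein scale, the trace-free part of $\nabla_a\nabla_b\sigma+P_{ab}\sigma$ vanishing, which for the product reduces precisely to $\sigma_{ss}=k\sigma$); this is the upgrade from the scalar to the full Einstein equation.
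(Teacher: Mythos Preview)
Your argument is correct and takes a genuinely different route from the paper. The paper invokes Gover's characterization of Poincar\'e--Einstein metrics via the almost--Einstein equation: it directly verifies that
\[
\nabla_{(a}\nabla_{b)_\circ}\sigma + \sigma\,\mathring{P}_{ab} = 0
\]
for the explicit singular Yamabe density of Lemma~\ref{Yamabe-soln}, by computing $\nabla^2\sigma$ and $\mathring{P}$ in the product metric $ds^2+\bar g$ and checking the two pieces cancel. You instead pass to the geodesic coordinate $\rho$ for $g^+$, recognize $g^+=d\rho^2+\phi^2\bar g$ as a warped product over the Einstein fiber $(\Sigma,\bar g)$, and compute $\mathrm{Ric}^+$ directly from the O'Neill warped-product formulas; the ODEs $\phi''=\phi$ and $(\phi')^2=\phi^2+k$ (equivalent to $\sigma_{ss}=k\sigma$, $\sigma_s^2=1+k\sigma^2$) collapse these to $\mathrm{Ric}^+=-(d-1)g^+$. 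Your approach is more elementary and self-contained, avoiding the appeal to~\cite{Goal}; the paper's approach is conceptually tighter from the conformal-tractor viewpoint and makes explicit that $\sigma$ is an almost--Einstein scale, which is the natural language for the broader programme. You even note this equivalence in your closing remark, so you have essentially identified both proofs.
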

\begin{proof}
To prove this theorem, we rely on a result of Gover~\cite{Goal}, that if there exists a defining density $\sigma$ for the boundary $\Sigma$ such that
$$\nabla_{(a} \nabla_{b)\circ} \sigma + \sigma \mathring{P}_{ab} = 0\,,$$
then the interior of the conformal manifold admits a Poincar\'e--Einstein metric representative in the scale $g^+$.

Without loss of generality, we will assume that $\bar{Sc} < 0$, which is constant because $(\Sigma,\bar{g})$ is Einstein. We now calculate. First, observe that
$$\nabla \sigma = Ar \cos (rs) \, ds\,,$$
where $A$ and $r$ are as in Lemma~\ref{Yamabe-soln}. Next, we have that
$$\nabla^2 \sigma = -Ar^2 \sin (rs) ds \otimes ds\,.$$
Thus, the trace-free part of this tensor is
$$\nabla_{(a} \nabla_{b)\circ} \sigma = -Ar^2 \sin(rs) (n_a n_b - \tfrac{1}{d} g_{ab}) =  \tfrac{d-1}{d} Ar^2 \sin(rs) (\tfrac{1}{d-1} \bar{g}_{ab} - n_a n_b)$$

Now note that that
\begin{align*}
\mathring{P}_{ab} &= P_{ab} - \tfrac{1}{d} g_{ab} J \\
&= P^\top +n_a n_b P_{nn} - \tfrac{1}{d} {g}_{ab} J\\
&= \bar{P} + n_a n_b P_{nn} - \tfrac{1}{d} g_{ab} J  \\
&= \mathring{\bar{P}} + \tfrac{1}{d-1} \bar{g}_{ab} \bar{J} + n_a n_b P_{nn} - \tfrac{1}{d} g_{ab} J  \\
&=  \tfrac{1}{d-1} \bar{g}_{ab} \bar{J} + n_a n_b (J-\bar{J}) - \tfrac{1}{d} g_{ab} J \\
&= \tfrac{1}{d-1} \bar{g}_{ab} \bar{J} - \tfrac{1}{d-1} n_a n_b \bar{J} - \tfrac{d-2}{d(d-1)} g_{ab} \bar{J} \\
&= \tfrac{2}{d}\left(\tfrac{1}{d-1} \bar{g}_{ab} -  n_a n_b \right) \bar{J} \\
&= \tfrac{1}{d(d-2)}\left(\tfrac{1}{d-1} \bar{g}_{ab} -  n_a n_b \right) \bar{Sc}\,.
\end{align*}
Using that $r^2 = \frac{\bar{Sc}}{(d-1)(2-d)}$, one may easily check that
$$\nabla_{(a} \nabla_{b)\circ} \sigma + \sigma \mathring{P}_{ab} = 0\,,$$
as required.

When $\bar{Sc} = 0$, the identity trivially follows, as $\nabla^2 s = 0$. A similar calculation as the above follows for the case when $\bar{Sc} > 0$.
\end{proof}
As an aside, this theorem yields the following corollary:
\begin{corollary} \label{Einstein-vanishing-FFs2}
When $(\Sigma, \bar{g})$ is Einstein and $\Sigma \hookrightarrow (\mathbb{R} \times \Sigma, ds^2 + \bar{g})$ is a smooth embedding, $\FFo{k} = 0$ for each $k \in \{2, \ldots, d-1\}$.
\end{corollary}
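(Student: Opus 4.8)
The plan is to exploit the Poincar\'e--Einstein structure produced by Theorem~\ref{E-PE} rather than to attack the fundamental forms one weight at a time. Since each $\FFo{k}$ is a conformal invariant, it suffices to exhibit a single metric representative in which it manifestly vanishes, and the singular Yamabe scale $\sigma$ of Lemma~\ref{Yamabe-soln} is the natural candidate. I would recall from~\cite{BGW1} that the conformal fundamental forms are precisely the obstructions to the (asymptotically) Poincar\'e--Einstein condition: for $d$ even they are defined through transverse order $d-2$, and the order-$(k-1)$ obstruction to extending a defining density to an Einstein scale is governed by $\FFo{k}$. Theorem~\ref{E-PE} supplies a genuine (not merely asymptotic) Einstein scale $g^+=(ds^2+\bar{g})/\sigma^2$ with conformal infinity $\Sigma$, so every such obstruction must vanish, giving $\FFo{k}=0$ for $2\le k\le d-1$.

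To see this concretely I would split the range by parity. For even $k$ the vanishing is already Lemma~\ref{even-conf-FFs}, which needs only the product structure. For odd $k$ I would work in the scale $g=ds^2+\bar{g}$ and use the almost-Einstein equation extracted in the proof of Theorem~\ref{E-PE},
$$\nabla_{(a} \nabla_{b)\circ} \sigma + \sigma \mathring{P}_{ab} = 0\,.$$
The construction of~\cite{BGW1} expresses $\FFo{2n+3}$, in the singular Yamabe gauge, through the tangential trace-free transverse derivatives of $\sigma$---equivalently, through the $s$-Taylor coefficients of the trace-free second fundamental forms of the level sets $\{\sigma=\mathrm{const}\}$. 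The displayed equation rewrites each such derivative in terms of $\mathring{P}_{ab}$, and the computation of $\mathring{P}_{ab}$ already carried out in Theorem~\ref{E-PE} shows that it lies in the span of $\bar{g}_{ab}$ and $n_a n_b$; hence its projection to $T^*\Sigma$ is pure trace and its tangential trace-free part vanishes identically. Consequently every level set is trace-free totally geodesic, all of these transverse derivatives vanish, and (inducting on $n$ to absorb the lower-order fundamental forms) $\FFo{2n+3}=0$ throughout the admissible range.

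The only genuinely external ingredient is the dictionary from~\cite{BGW1} between the conformal fundamental forms and the Yamabe density $\sigma$ (equivalently, the Poincar\'e--Einstein obstructions); once this is in hand the argument is purely algebraic, and I expect establishing and invoking it correctly to be the main difficulty. The subtler bookkeeping point is matching orders: the odd-order vanishing is claimed only up to $k=d-1$ because, by Theorem~\ref{Yamabe-general}, the singular Yamabe expansion---and with it the family of conformal fundamental forms for $d$ even---is controlled only through order $d-1$, with the Willmore invariant first appearing at order $d$. For $d$ odd one obtains the analogous statement, but only through transverse order $\tfrac{d-1}{2}$.
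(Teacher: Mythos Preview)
Your approach is correct and is essentially the paper's own proof: invoke Theorem~\ref{E-PE} to produce a genuine Poincar\'e--Einstein scale, then appeal to the result of~\cite{BGW1} that the conformal fundamental forms are the obstructions to this condition and hence vanish (the paper cites this as~\cite[Theorem~1.8]{BGW1} and says nothing more). Your second paragraph---the parity split and the explicit verification via the almost-Einstein equation and the form of $\mathring{P}_{ab}$---is a pleasant elaboration but not needed once the black-box citation is granted.
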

\begin{proof}
Because $(\Sigma,\bar{g})$ is Einstein, it follows that $(\mathbb{R} \times \Sigma,\tfrac{ds^2 + \bar{g}}{\sigma^2})$ is Poincar\'e--Einstein. But then the result follows trivially~\cite[Theorem 1.8]{BGW1}.
\end{proof}

Now, from the above technical results, we may derive the following relationship between conformal product manifolds and odd conformal fundamental forms.
\begin{proposition} \label{odd-conf-FFs}
Let $\Sigma^{d-1} \hookrightarrow (M^d := \mathbb{R} \times \Sigma, ds^2 + \bar{g})$ with $d \geq 4$ even. Then, $\IIIo = \tfrac{d-3}{d-2} \mathring{\bar{P}}$ and $\FFo{k} = \mathcal{O}_{k}(\IIIo)$ for each $k \in \{4 \ldots,d-1\}$, where $\mathcal{O}_{k}$ is some map on the jets of $\IIIo$ satisfying $\mathcal{O}_k(0) = 0$. Furthermore, $\IIo = 0 = \mathcal{O}_{2m}(\IIIo)$ for each $2 \leq m \leq \tfrac{d-2}{2}$.
\end{proposition}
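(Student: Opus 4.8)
The plan is to transcribe the argument of Proposition~\ref{diff-op-FFs} into the conformal setting, with the Einstein condition $\mathring{\bar{P}}=0$ playing the role that $\bar\nabla^2 f=0$ played before. The identity $\IIIo=\tfrac{d-3}{d-2}\mathring{\bar{P}}$ is exactly Proposition~\ref{IIIo-Pb}, so the first assertion is immediate. For the even forms I would recall from the proof of Lemma~\ref{even-conf-FFs} that $\II=0$ on a product manifold, whence $\IIo=0$, and that Lemma~\ref{even-conf-FFs} gives $\FFo{2m}=0$ for all $2\le m\le\tfrac{d-2}{2}$; taking $\mathcal{O}_{2m}:=0$ then discharges the ``furthermore'' clause.

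For the odd range $k\in\{5,7,\dots,d-1\}$ I would first, exactly as in Lemma~\ref{even-conf-FFs}, rewrite each $\FFo{k}$ in the representative $ds^2+\bar{g}$ as a natural \emph{intrinsic} conformal invariant of $(\Sigma,\bar{g})$: since $\II=0$ all transverse derivatives of $n$ drop out, the ambient curvature collapses to $\bar{R}$, and no undifferentiated $n$ can survive, so $\FFo{k}$ becomes a natural section of $\odot^2_{\circ} T^*\Sigma[3-k]$ built from $\bar{g}$, $\bar{R}$, and $\bar\nabla$. Substituting $\bar{R}$ in terms of $\bar{W}$ and $\bar{P}=\mathring{\bar{P}}+\tfrac{\bar J}{d-1}\bar{g}$, I would split the formula as $\FFo{k}=\mathcal{O}_k(\mathring{\bar{P}})+Q_k$, where $\mathcal{O}_k$ gathers every monomial carrying at least one factor of $\mathring{\bar{P}}$ or a covariant derivative of it, and $Q_k$ is the remainder, built purely from $\bar{W}$, $\bar J$, $\bar{g}$, $\bar g^{-1}$ and $\bar\nabla$. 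By construction $\mathcal{O}_k(0)=0$.

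Now I would invoke Corollary~\ref{Einstein-vanishing-FFs2}: if $(\Sigma,\bar g)$ is Einstein then $\FFo{k}=0$ for all $k\in\{2,\dots,d-1\}$. On an Einstein manifold $\mathring{\bar{P}}=0$ and $\bar J$ is constant, so every covariant derivative of $\mathring{\bar{P}}$ also vanishes and $\mathcal{O}_k$ vanishes identically there; hence $Q_k=0$ on every Einstein metric. Since $\mathring{\bar{P}}=\tfrac{d-2}{d-3}\IIIo$, it then suffices to promote ``$Q_k$ vanishes on all Einstein metrics'' to ``$Q_k\equiv0$'', after which $\FFo{k}=\mathcal{O}_k(\mathring{\bar{P}})=:\mathcal{O}_k(\IIIo)$ with $\mathcal{O}_k(0)=0$, completing the proof.

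This last promotion is the main obstacle, and is the precise conformal counterpart of the step ``$Q_k$ vanishes for every $f$ with $\bar\nabla^2 f=0$, hence $Q_k\equiv0$'' in Proposition~\ref{diff-op-FFs}. Here the freely varying data are the jets of $\bar{W}$ and the value of $\bar J$ at a point rather than the jets of $f$, so I would argue pointwise: fix $p\in\Sigma$ and exhibit a family of Einstein metrics whose Weyl jets---subject only to the forced constraints $\bar\nabla^a\bar{W}_{abcd}=0$ and $\bar\nabla\bar J=0$---are Zariski-dense in the admissible jet space. Since $Q_k$ is a universal polynomial in these jet variables vanishing on a dense set, it must vanish identically. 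Establishing this genericity of Einstein Weyl-jets is the delicate point on which the argument turns.
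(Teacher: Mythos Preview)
Your approach differs substantially from the paper's, and the gap you flag at the end is real and, as stated, not obviously fillable.

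The paper does not argue by ``vanishing on Einstein implies vanishing identically.'' Instead, for $d\ge 6$ it works with the conformally invariant \emph{extension} $\IIIo^{\rm e}_{ab}:=W_{\hat n ab\hat n}+2\sigma C_{\hat n(ab)}-\tfrac{\sigma^2}{d-4}B_{ab}$, where $\sigma$ is the singular Yamabe density (which, by Theorem~\ref{Yamabe-general}, is an odd power series in $s$ with coefficients polynomial in $J$ and its derivatives). Since the transverse jets of $\IIIo^{\rm e}$ encode all $\FFo{k}$ up to order $d-1$, the paper computes $\otop\nabla_n^{2k}\IIIo^{\rm e}$ term by term on the product and checks that each surviving piece is \emph{manifestly} built from $\mathring{\bar P}$: the power-series coefficients of $\sigma$ involve only $\bar J$, and $\bar\nabla_a\bar J$ is a divergence of $\mathring{\bar P}$; the projected term $C^{\top}_{n(ab)}$ vanishes on the product; and the projected Bach piece reduces to $\IIIo$ and $\bar B$, which is again a differential expression in $\mathring{\bar P}$. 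This gives the operators $\mathcal O_k$ constructively.

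Your route instead relies on a density argument, but two issues obstruct it. First, the splitting $\FFo{k}=\mathcal O_k(\mathring{\bar P})+Q_k$ is not canonical: the generators $\bar W$, $\mathring{\bar P}$, $\bar J$ and their covariant derivatives obey syzygies (second Bianchi gives $\bar\nabla^a\bar W_{abcd}$ as a multiple of the Cotton tensor, hence a derivative of $\bar P$; contracted Bianchi expresses $\bar\nabla\bar J$ via $\mathring{\bar P}$), so ``a monomial not containing $\mathring{\bar P}$'' is ambiguous without further work. Second, and more seriously, even after fixing a splitting, the claim that Einstein Weyl-jets are Zariski-dense in the relevant jet space is a nontrivial statement about formal solvability of the Einstein equations with prescribed curvature jets; it is neither proved here nor a standard citable fact. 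The analogy with Proposition~\ref{diff-op-FFs} breaks down precisely at this point: there the free datum was a single function $f$ subject to the \emph{linear} constraint $\bar\nabla^2 f=0$, whose value and gradient at a point are completely unconstrained, so the pointwise vanishing of an algebraic expression in $\bar\nabla f$ forces it to be identically zero. Here the ``free datum'' is the metric itself subject to the nonlinear Einstein system, and no comparable freedom statement is available.
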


\begin{proof}
First, observe from Proposition~\ref{IIIo-Pb}, it follows that $\IIIo = \tfrac{d-3}{d-2} \mathring{\bar{P}}$. So it suffices to show that all conformal fundamental forms are expressible in terms of a (potentially differential) operator on $\IIIo$ or equivalently $\mathring{\bar{P}}$.

Now, consider the case where $d = 4$. Then, from Lemma~\ref{even-conf-FFs} the proposition follows.

On the other hand, when $d \geq 6$, there  exists a conformally-invariant extension of the third conformal fundamental form~\cite{Blitz1}:
$$\IIIo^{\rm e}_{ab} := W_{\hat{n} ab \hat{n}} + 2 \sigma C_{\hat{n} (ab)} - \tfrac{\sigma^2}{d-4} B_{ab}\,,$$
where $\sigma$ is the singular Yamabe defining function described by Theorem~\ref{Yamabe-general} and $\hat{n} := d \sigma$. Now as the jets of $\IIIo^{\rm e}$ entirely capture the conformal fundamental forms from $\IIIo$ up to the $(d-1)$th conformal fundamental form~\cite{Blitz1}, to prove the proposition it is sufficient to show that $\IIIo^{\rm e}$ is expressible entirely in terms of $\IIIo$ and $\mathring{\bar{P}}$.

As $M$ is a product manifold, in the coordinates specified, $\nabla_n \mathcal{C} = 0$ for every tensor $C$ constructed from the metric and its derivatives. Furthermore, as $\sigma = \Omega s$, where
$$\Omega := \sum_{m = 0}^{\infty} \bar{\Omega}_{(m)} s^m\,,$$
with $\bar{\Omega}_{(m)}$ a coefficient that depends only on the induced scalar curvature and its derivatives (and $\bar{\Omega}_{(0)} = 1$), then a simple calculation yields
$$\nabla_n \sigma = 1 + \sum_{m=1}^{\infty} (m+1) \bar{\Omega}_{(m)} s^m$$
and
$$\nabla_n \hat{n}^a =  \sum_{m=1}^{\infty} \left[ (m+1) (\bar{\nabla}^a \bar{\Omega}_{(m)}) s + m(m+1) n^a \bar{\Omega}_{(m)} \right] s^{m-1} \,.$$
A consequence of this calculation is that, for $\ell \geq 1$,
$$\nabla_n^{\ell} \sigma \= \ell! \, \bar{\Omega}_{(\ell -1)}\,,$$
and
$$\nabla_n^{\ell} \hat{n}^a \= \ell !\, \bar{\nabla}^a \bar{\Omega}_{(\ell-1)} + (\ell + 1)! \, \bar{\Omega}_{(\ell)} n^a\,.$$

Now we may use these identities to evaluate the leading structure of the $(2k+3)$rd odd conformal fundamental form:
\begin{align*}
\otop \nabla_n^{2k} \IIIo^{\rm e}_{ab} &\=  \sum_{\ell +m = 2k} \Big [(\nabla_n^{\ell} \hat{n}^c)(\nabla_n^m \hat{n}^d ) W_{cabd} + 2 (\nabla_n^{\ell} \sigma)(\nabla_n^m \hat{n}^c) C_{c(ab)} - \tfrac{1}{d-4} (\nabla_n^{\ell} \sigma)(\nabla_n^m \sigma) B_{ab} \Big]^{\otop} \\
&\= \sum_{\ell +m = 2k} \Big [ \ell! m! (\bar{\nabla}^c \bar{\Omega}_{(\ell-1)})(\bar{\nabla}^d \bar{\Omega}_{(m-1)}) W_{cabd} + (\ell+1)!(m+1)! \bar{\Omega}_{(\ell)} \bar{\Omega}_{(m)} \IIIo_{ab} \\
 &\phantom{\= \sum_{\ell +m = 2k} \Big [}   + 2 \ell! m!\bar{\Omega}_{(\ell-1)} (\bar{\nabla}^c \bar{\Omega}_{(m-1)} ) \bar{C}_{c(ab)} + 2 \ell! (m+1)! \bar{\Omega}_{(\ell-1)} \bar{\Omega}_{(m)} C_{n(ab)}  \\
&\phantom{\= \sum_{\ell +m = 2k} \Big [} - \tfrac{1}{d-4} \ell! m! \bar{\Omega}_{(\ell-1)} \bar{\Omega}_{(m-1)} B_{ab} \Big]^{\otop}
\end{align*}
Because $\bar{\Omega}_{(m)}$ depends only on the induced scalar curvature and its derivatives (see the discussion after Theorem~\ref{Yamabe-general}) and
$$\bar{\nabla}_a \bar{Sc} = 2(d-1) \bar{\nabla}^b \mathring{\bar{P}}_{ab}\,,$$
it follows that all of the above terms containing $\bar{\nabla} \bar{\Omega}_{(m)}$ are expressible in terms of an operator on $\mathring{\bar{P}}$. Next, observe that because $\IIo = \IVo = 0$ and the mean curvature $H := \tfrac{1}{d-1} \II_a^a =  0$ on product manifolds (following from Lemma~\ref{even-conf-FFs} and Corrollary~\ref{prod-metric}), $C_{n(ab)}^\top \= 0$. Finally, from an expression for the fifth conformal fundamental form in~\cite{BGW1}, it follows that $B_{ab}^{\otop}$ is expressible in terms of the third fundamental form and the hypersurface Bach tensor $\bar{B}_{ab}$. But the hypersurface Bach tensor is expressible in terms of $\mathring{\bar{P}}$.

The above discussion shows that the leading transverse-order term in an odd-order conformal fundamental is expressible in terms of an operator on $\mathring{\bar{P}}$. By induction, it follows that the entirety of such an odd-order conformal fundamental form is expressible the same way, as the subleading terms are expressible in terms of lower-order conformal fundamental forms which are either even-order (which vanish) or odd-order, which are expressible in the same way.

\end{proof}

\medskip

Given Lemma~\ref{even-conf-FFs} and Proposition~\ref{odd-conf-FFs}, it is necessary that the even conformal fundamental forms vanish and the odd conformal fundamental forms take on particular forms in order for a conformal hypersurface identity to admit a product metric in its conformal class. However, this is not sufficient. To prove this statement, we first need an identification theorem between conformal manifolds with matching conformal fundamental forms.

\begin{theorem} \label{conformal-identification}
Let $\iota : \Sigma \hookrightarrow (M^d,\cc)$ and $\hat{\iota} : \Sigma \hookrightarrow (M^d,\hat{\cc})$ with $d$ even and $\iota(p) = \hat{\iota}(p)$ for all $p \in \Sigma$. Furthermore, let $\sigma$ be a singular Yamabe density for $\iota$. Then, $\iota^* \cc = \hat{\iota}^* \hat{\cc}$ and $\FFo{k} = \hat{\FFo{k}}$ for each $k \leq d-1$ if and only if 
$$\hat{\cc} = \cc + \mathcal{O}(\sigma^{d-1})\,.$$
\end{theorem}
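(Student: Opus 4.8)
The plan is to mimic the Riemannian identification result (Theorem~\ref{equal-metrics}) but working inside the conformal class and tracking the singular Yamabe scale. The key idea is that a conformal hypersurface embedding, once a singular Yamabe density $\sigma$ is chosen, is rigidified: $\sigma$ gives a canonical transverse coordinate $s$ (via the metric representative in which $\sigma=s$, or the product-type scale), and the induced metric together with the conformal fundamental forms up to order $d-1$ should determine the Taylor expansion of a metric representative along $\Sigma$ up to order $\sigma^{d-1}$. First I would fix the singular Yamabe scale $\sigma$ for $\iota$ and pass to the associated metric representative $g \in \cc$, so that $\sigma$ becomes a genuine defining function and $\nabla\sigma$ plays the role of the unit conormal. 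One direction is essentially immediate: if $\hat\cc = \cc + \mathcal{O}(\sigma^{d-1})$, then any natural conformal hypersurface invariant of transverse order $k-1 \leq d-2$ depends only on the $(d-2)$-jet of the metric transverse to $\Sigma$, and hence cannot detect the $\mathcal{O}(\sigma^{d-1})$ discrepancy; so the induced conformal metrics and all $\FFo{k}$ with $k \leq d-1$ agree.

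For the converse, the strategy is to show that equality of $\iota^*\cc$ and of $\FFo{k}$ for all $k \leq d-1$ forces the two conformal structures to have identical $(d-2)$-jets along $\Sigma$. The essential tool is the conformal analogue of Corollary~\ref{cor-lies}: just as $\FF{k}$ recovers $\tfrac12\iota^*\mathcal{L}_n^{k-1}g$ modulo lower-order data, each conformal fundamental form $\FFo{k}$ should recover the transverse $(k-1)$-jet of the conformal metric modulo lower-order (already-determined) conformal data. Concretely, in the Yamabe scale one expands $g = d s^2 + \bar g(s,x)$ and argues inductively on the transverse order: having matched the metric jets up to order $m-1$, the tensor $\FFo{m+1}$ has leading transverse part proportional to $\otop\partial_s^{m}\bar g$ plus lower-order terms built from previously-matched quantities; equality of $\FFo{m+1}$ then forces $\partial_s^{m}\bar g$ to agree (its trace-free part directly, and its trace via the Yamabe equation $|I_\sigma|^2 = 1$ which pins down the conformal factor order-by-order, exactly as in Theorem~\ref{Yamabe-general}). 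Running this induction for $m$ up to $d-2$ yields agreement of the $(d-2)$-jets, i.e. $\hat\cc = \cc + \mathcal{O}(\sigma^{d-1})$.

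The main obstacle is the trace part of the metric jets: conformal fundamental forms are trace-free, so $\FFo{k}$ alone controls only $\otop\partial_s^{k-1}\bar g$, not its trace. Recovering the trace is precisely where the singular Yamabe density $\sigma$ must do the work, since fixing $|I_\sigma|^2 = 1$ order-by-order determines the transverse conformal factor (compare Theorem~\ref{Yamabe-general}); the delicate point is checking that this determination is compatible with, and not obstructed by, the dimension-dependent breakdown at order $d$ (the Willmore obstruction $\psi_d$), which is exactly why the theorem stops at order $d-1$ and why $\hat\cc = \cc$ is only claimed modulo $\mathcal{O}(\sigma^{d-1})$. A secondary technical point is making precise the claim that $\FFo{k}$ has the clean leading-plus-lower-order structure in the Yamabe scale; I would either cite the construction of conformal fundamental forms from~\cite{BGW1,Blitz1} giving their explicit transverse-order behavior, or establish a short lemma of the Corollary~\ref{cor-lies} type adapted to the trace-free, conformally-weighted setting.
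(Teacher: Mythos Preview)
Your overall strategy is correct and matches the paper's: pass to a canonical representative determined by the singular Yamabe scale, put both metrics in geodesic normal form $dt^2 + \sum_{k}t^k g^{(k)}$, and argue inductively that each coefficient $g^{(k)}$ is a universal expression in the intrinsic geometry and the conformal fundamental forms up to order $k+1$. The easy direction is also handled the same way.

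The main divergence is in how the ``trace problem'' is solved. You propose to recover $\otop g^{(k)}$ directly from $\FFo{k+1}$ and then fix the trace $\bar g^{ij}g^{(k)}_{ij}$ separately using the Yamabe equation. The paper instead invokes a structural classification (cited as \cite[Theorem 2.5]{Blitz2}): every natural hypersurface invariant tangent to $\Sigma$ is a universal polynomial in $\{\bar g,\bar\nabla,\bar R,\IIo,\ldots,\FFo{k+1},H,J|_\Sigma,\ldots,\nabla_n^{k-2}J|_\Sigma\}$. The Yamabe equation then enters not to ``pin down the conformal factor'' but to eliminate the non-conformally-invariant ingredients $H$ and $\nabla_n^{k-1}J$ from this list: in the Graham--Lee representative one has $\Delta t + Jt = 0$, and expanding $\nabla_n^k(\Delta t+Jt)=0$ gives $(d-k)\nabla_n^{k-1}J = \text{ltots}$, which works precisely for $k\leq d-1$. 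What remains is a universal expression in intrinsic data and conformal fundamental forms alone, so equality of those forces $g^{(k)}=\hat g^{(k)}$.

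Your approach should also go through, but the ``short lemma of Corollary~\ref{cor-lies} type'' you propose is doing real work---it is essentially the content of the classification theorem the paper cites, and establishing from scratch that $\FFo{k+1}$ has leading part $\otop\partial_s^{k}\bar g$ in the Yamabe scale (with all lower-order terms already controlled) is not immediate from the abstract definition of conformal fundamental forms. The paper's route trades that work for an appeal to existing machinery; yours is more self-contained but would require substantially more detail at that step.
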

\begin{proof}
To prove the forward direction, it suffices to show that, for a given choice of $\bar{g} \in \iota^* \cc$, there exists metrics $g \in \cc$, $\hat{g} \in \hat{\cc}$ and an isometry $\phi$ such that $\iota^* g = \hat{\iota}^* \hat{g} = \bar{g}$ and $g - \phi^* \hat{g} = \mathcal{O}(s^{d-1})$ for $\sigma = [g;s]$. So, picking $\bar{g}$, we let $g \in \cc$, $\hat{g} \in \hat{\cc}$ be the unique Graham--Lee metrics~\cite{GrahamLee} in the conformal classes, where for $\hat{\sigma} = [\hat{g}, \hat{s}]$ we have that $|ds|_g^2 = 1 =|d\hat{s}|_{\hat{g}}^2$.

Because we are using Graham--Lee metric representatives, there always exists isometries $\Phi,\hat{\Phi} : \Sigma \times [0,\epsilon) \rightarrow M$ such that, in a collar neighborhood of $\Sigma$,
\begin{align*}
\Phi^* g &= dt^2 + (\bar{g}_{ij}(\vec{x}) + t g^{(1)}_{ij}(\vec{x}) + \tfrac{1}{2} t^2 g^{(2)}_{ij}(\vec{x}) + \cdots) dx^i dx^j  \\
\hat{\Phi}^* \hat{g} &= dt^2 + (\bar{g}_{ij}(\vec{x}) + t \hat{g}^{(1)}_{ij}(\vec{x}) + \tfrac{1}{2} t^2 \hat{g}^{(2)}_{ij}(\vec{x}) + \cdots) dx^i dx^j \,.
\end{align*}
Note that $\Phi^*(s) = \hat{\Phi}^*(\hat{s}) = t$. Thus, the existence of the desired isometry $\phi$ is equivalent to $g^{(k)}(\vec{x}) = \hat{g}^{(k)}(\vec{x})$ for all $k \leq d-2$.

Observe that each coefficient $g^{(k)}$ is a natural hypersurface invariant (that is tangent to the hypersurface), as it can be obtained via $\mathcal{L}_n^{k} \Phi^* g|_{t = 0}$, where $n =\partial_t$, and similarly for $\hat{g}^{(k)}$. As such, from~\cite[Theorem 2.5]{Blitz2} it follows that $g^{(k)}$ is expressible as a partial contraction polynomial in
$$\{\bar{g}, \bar{g}^{-1}, \bar{\nabla}, \bar{R}, \IIo, \,\ldots\, \FFo{k+1}, H, J|_{\Sigma}, \ldots, \nabla_n^{k-2} J|_{\Sigma}\}\,,$$
and the expression is universal, meaning that the symbolic form of the expression does not depend on the underlying choice of metric.

Note that $|n|^2 = 1$, and so from the singular Yamabe equation $|n|^2 - \tfrac{2}{d} t(\Delta t + Jt) = 1 + \mathcal{O}(t^d)$, we have that $\Delta t + Jt = 0$ in a collar neighborhood of $\Sigma$. So it follows that $\nabla_n^k (\Delta t + Jt) = 0$ for every $1 \leq k \leq d-1$. Expanding this equation, we have
\begin{align*}
0 \=& \nabla_n^k \Delta t + (k-1) \nabla_n^{k-1} J \\
\=& \nabla_n^{k-1} [\nabla_n, \nabla_a] \nabla^a t + (k-1) \nabla_n^{k-1} J \\
\=& -\nabla_n^{k-1} Ric_{nn} + (k-1) \nabla_n^{k-1} J + \text{ltots} \\
\=& -(d-1) \nabla_n^{k-1} J + (k-1) \nabla_n^{k-1} J + \text{ltots} \\
\=& -(d-k) \nabla_n^{k-1} J + \text{ltots.}
\end{align*}
In the above, $\text{ltots}$ stands for ``lower transverse order terms,'' which are those terms which involve fewer normal derivatives of the metric. Note that the fourth identity follows from~\cite[Lemma 2.4]{Blitz1}. So, it is clear that for Graham-Lee metrics, $\nabla_n^{k-1} J|_{\Sigma}$ is expressible in terms of lower transverse order terms so long as $k < d$. But because $H = 0$ (because $H \= \tfrac{1}{d} \Delta t = -\rho|_{\Sigma}$), it follows by induction that \textit{in a Graham--Lee metric representative}, the coefficients $ g^{(k)}$ have universal expressions in terms of the set
$$\{\bar{g}, \bar{g}^{-1}, \bar{\nabla}, \bar{R}, \IIo, \,\ldots\, \FFo{k+1}\}\,.$$
As these expressions are universal (for Graham--Lee metric representatives), it follows that if the intrinsic geometry and the conformal fundamental forms agree for $\iota$ and $\hat{\iota}$, then viewed as tensors on $\Sigma$, $g^{(k)} =  \hat{g}^{(k)}$ for all $k \leq d-2$ (as the conformal fundamental forms are only defined up to that order).

\medskip

The reverse direction follows by noting that if any pair of conformal fundamental forms $\FFo{k}$, $\hat{\FFo{k}}$ are not equal, then it is clear from the Graham--Lee normal form of the metrics that they are not isometric.

\end{proof}

Using this theorem, we may now prove the following:
\begin{corollary} \label{classification-conformal-product}
Let $\Sigma \hookrightarrow (M^d,\cc)$ be a conformal hypersurface embedding with $d$ even. Suppose that there exists some $\tau \in \Gamma(\ce_+ \Sigma[1])$ such that
$$(\bar{\nabla}_{(a} \bar{\nabla}_{b)_{\circ}} + \mathring{\bar{P}}_{ab} - \tfrac{d-2}{d-3} \IIIo_{ab}) \tau = 0\,,$$
and that in the $\tau$ scale, $\FFo{k} = \mathcal{O}_k(\IIIo)$ with $3 \leq k \leq d-1$. Then $\Sigma \hookrightarrow (M,\cc)$ is an asymptotic conformal product manifold.
\end{corollary}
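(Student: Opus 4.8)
The plan is to deduce the result from the identification theorem, Theorem~\ref{conformal-identification}, by exhibiting a genuine conformal product model whose induced conformal metric and whose conformal fundamental forms agree, up to order $d-1$, with those of the given embedding $\iota$. To begin, I would pass to the \emph{$\tau$-scale}: since $\tau \in \Gamma(\ce_+\Sigma[1])$ is positive it singles out a unique representative $\bar{g} \in \bar{\cc}$ in which $\tau$ becomes the constant density $1$, so that $\bar{\nabla}_a \tau = 0$ and hence $\bar{\nabla}_{(a}\bar{\nabla}_{b)_{\circ}}\tau = 0$. In this scale the hypothesis on $\tau$ collapses, after dividing by the nowhere-vanishing $\tau$, to the purely algebraic relation $\IIIo_{ab} = \tfrac{d-3}{d-2}\mathring{\bar{P}}_{ab}$, which is exactly the identity that Proposition~\ref{IIIo-Pb} imposes on an honest product embedding. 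I would then take as the comparison model the product embedding $\hat{\iota} : \Sigma \hookrightarrow (\mathbb{R}\times\Sigma,\, ds^2 + \bar{g})$, identifying a collar of $M$ about $\Sigma$ with a collar of $\mathbb{R}\times\Sigma$ so that both conformal structures sit on a single underlying manifold, as Theorem~\ref{conformal-identification} requires.

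Next I would check that $\iota$ and $\hat{\iota}$ induce the same conformal class on $\Sigma$ --- both equal $[\bar{g}]$ by construction --- and carry the same conformal fundamental forms through order $d-1$. For the even forms, Proposition~\ref{odd-conf-FFs} gives $\mathcal{O}_{2m}(\IIIo) = 0$, so the hypothesis $\FFo{2m} = \mathcal{O}_{2m}(\IIIo)$ forces $\FFo{2m} = 0$ for $4 \le 2m \le d-1$, matching $\hat{\FFo{2m}} = 0$ from Lemma~\ref{even-conf-FFs}; the remaining even case, $\IIo$, vanishes for the model by Lemma~\ref{even-conf-FFs} and must likewise vanish for $\iota$ as the $k=2$ base instance of the product hypothesis. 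For $\IIIo$ itself both embeddings return $\tfrac{d-3}{d-2}\mathring{\bar{P}}$ --- the model by Proposition~\ref{IIIo-Pb}, the embedding $\iota$ by the reduced $\tau$-equation above. For the higher odd forms, Proposition~\ref{odd-conf-FFs} writes each model form $\hat{\FFo{k}}$ as the \emph{same} universal operator $\mathcal{O}_k$ applied to $\hat{\IIIo}$; since $\hat{\iota}$ and $\iota$ share the induced metric $\bar{g}$ and, by the previous step, $\hat{\IIIo} = \IIIo$, the hypothesis $\FFo{k} = \mathcal{O}_k(\IIIo)$ yields $\FFo{k} = \mathcal{O}_k(\IIIo) = \mathcal{O}_k(\hat{\IIIo}) = \hat{\FFo{k}}$ for every $4 \le k \le d-1$.

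With the induced conformal metrics and all conformal fundamental forms up to order $d-1$ now matched, Theorem~\ref{conformal-identification}, applied with a singular Yamabe density $\sigma$ for $\iota$, gives $\hat{\cc} = \cc + \mathcal{O}(\sigma^{d-1})$. Since $\sigma = s + \mathcal{O}(s^3)$ by Theorem~\ref{Yamabe-general}, this is agreement to order $s^{d-1}$, and because $\hat{\cc}$ contains the product metric $ds^2 + \bar{g}$ there is a representative $g \in \cc$ with $g = ds^2 + \bar{g} + \mathcal{O}(s^{d-1})$ in the collar, which is exactly the definition of an asymptotic conformal product manifold.

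The step I expect to be the main obstacle is the middle comparison: one must be certain that the operators $\mathcal{O}_k$ occurring in the hypothesis are literally the universal operators produced by Proposition~\ref{odd-conf-FFs} for the product model, so that feeding them the common data ($\IIIo$ together with the shared induced metric $\bar{g}$) forces identical outputs. Making this precise --- together with the careful reduction of the $\tau$-equation in its own scale and the requirement that $\IIo$ vanish at the base --- is what legitimizes the application of Theorem~\ref{conformal-identification}.
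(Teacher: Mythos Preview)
Your proposal is correct and follows essentially the same route as the paper: build the product model $\hat g = ds^2 + \bar g$ in the $\tau$-scale, use Proposition~\ref{odd-conf-FFs} to compute its conformal fundamental forms, match them against the hypotheses, and invoke Theorem~\ref{conformal-identification}. Your explicit reduction of the $\tau$-equation in its own scale to $\IIIo = \tfrac{d-3}{d-2}\mathring{\bar P}$ is in fact more transparent than the paper's treatment; the concern you flag about $\IIo$ (which the stated hypothesis $3\le k\le d-1$ does not directly cover) is a genuine one that the paper handles only implicitly, via the umbilicity assumption mentioned in the remark following the corollary.
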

\begin{proof}
From Theorem~\ref{conformal-identification}, it suffices to show that the conformal hypersurface embedding $\iota: \Sigma \hookrightarrow (M,\cc)$ has conformal fundamental forms that agree with some asymptotic conformal product manifold. Now given a choice of representative $g_{\tau}$ determined by any extension of $\tau$ to $(M,\cc)$, we may identify an isometry $\Phi : \Sigma \times [0,\epsilon) \rightarrow M$ such that $\Phi^* g_{\tau}$ is in geodesic normal form. Now on the same collar neighborhood, we prescribe another metric $\hat{g} := dt^2 + \bar{g}_{ij}(\vec{x})dx^i dx^j + \mathcal{O}(t^{d-1})$, where $\Sigma = \mathcal{Z}(t)$ and $\bar{g} = \Phi^* \iota^* g_{\tau}$ with $\Phi^*$ acting in the natural way on $\Sigma$. This metric in turn defines a conformal class of metrics $\hat{\cc}$ in a collar neighborhood of $\Sigma$, with conformal embedding $\hat{\iota} : \Sigma \hookrightarrow (M,\hat{\cc})$, with $\hat{\iota}^* \hat{\cc} = \iota^* \cc$ and with conformal fundamental forms given by Proposition~\ref{odd-conf-FFs}: $\IIo = 0$, $\IIIo = \tfrac{d-3}{d-2} \mathring{\bar{P}}$, and $\FFo{k} = \mathcal{O}_k(\IIIo)$.

But in the scale $g_{\tau}$, from the hypothesis these are also the conformal fundamental forms of $\Sigma \hookrightarrow (M,\cc)$. The corollary follows.

\end{proof}

\begin{remark}
The equation
$$(\bar{\nabla}_{(a} \bar{\nabla}_{b)_{\circ}} + \mathring{\bar{P}}_{ab} - \tfrac{d-2}{d-3} \IIIo_{ab}) \tau = 0\,,$$
in the statement of Corollary~\ref{classification-conformal-product} is a conformally invariant Jacobi-like differential equation for umbilic hypersurface embeddings $\Sigma \hookrightarrow (M^d,g)$. Solutions to this equation exist if and only if there exists an infinitesimal family of umbilic hypersurfaces generated by the vector field $\tau n$, where $n$ is the unit normal vector to $\Sigma$. This follows by demanding that the first variation of the trace-free second fundamental form with respect to the embedding vanishes.
\end{remark}

\bibliographystyle{siam}
\bibliography{higherFF-bib}

\end{document}